\newtheorem{theorem}{Theorem}
\newtheorem{lemma}{Lemma}
\newtheorem{proposition}{Proposition}
\theoremstyle{definition}
\newtheorem{remark}{Remark}
\newcommand{\RR}{\mathbb{R}}
\newcommand{\CC}{\mathbb{C}}
\newcommand{\MM}{\textnormal{\textbf{M}}}
\newcommand{\Gl}{\textnormal{\textbf{Gl}}}
\newcommand{\fourdiag}{\hspace{0.02cm}\begin{tikzpicture}[scale=0.15,baseline=+1.5mm] \draw[color=black] (0,1) -- (1,0); \draw[color=black] (0,0.5) -- (0.5,0); \draw[color=black] (0.5,1) -- (1,0.5); \end{tikzpicture}}
\renewcommand{\maketitle}{
\pagestyle{plain}
\vspace*{\baselineskip}
\begin{center}
\MakeUppercase{\small Ralph John de la Cruz} \\ \emph{Institute of Mathematics, University of the Philippines} \\ \emph{Diliman, Quezon City 1105, Philippines} \\ (E-Mail: rjdelacruz@math.upd.edu.ph)\\[0.25cm]
\MakeUppercase{\small Philip Saltenberger} \\ \emph{Institute for Numerical Analysis, TU Braunschweig} \\ \emph{Universit\"atsplatz 2, 38106 Braunschweig, Germany} \\ (E-Mail: philip.saltenberger@tu-bs.de) \\ Corresponding author

\vspace*{3\baselineskip}

\MakeUppercase{\large \@title}
\end{center} \vskip-\baselineskip
}
\titleformat{\section}[hang]{\scshape}{\thesection. }{0pt}{\centering}[]
\title{Generic canonical forms for perplectic and symplectic normal matrices}
\begin{document}
\maketitle
\begin{abstract}
Let $B = J_{2n}$ or $B=R_n$ for the matrices given by
$$J_{2n} = \left[ \begin{smallmatrix} & I_n \\ - I_n & \end{smallmatrix} \right] \in \MM_{2n}(\CC) \quad \textnormal{or} \quad R_{n} = \left[ \begin{smallmatrix}& & 1 \\ & \iddots & \\ 1 & &  \end{smallmatrix} \right] \in \MM_n(\CC).$$
A matrix $A$ is called $B$-normal if $AA^\star = A^\star A$ holds for $A$ and its adjoint matrix $A^\star := B^{-1}A^HB$. In addition, a matrix $Q$ is called $B$-unitary, if $Q^HBQ = B$. We develop sparse canonical forms for nondefective (i.e. diagonalizable) $J_{2n}/R_n$-normal matrices under $J_{2n}/R_n$-unitary similarity transformations. For both cases we show that these forms exist for an open and dense subset of $J_{2n}/R_n$-normal matrices. This implies that these forms can be seen as topologically 'generic' for $J_{2n}/R_n$-normal matrices since they exist for all such matrices except a nowhere dense subset.
\end{abstract}

\section{Introduction}
\label{s:intro}

For any arbitrary, nonsingular matrix $B \in \Gl_n(\CC)$ the function
$$[  \cdot , \cdot]: \CC^n \times \CC^n \rightarrow \CC, \; (x,y) \mapsto x^HBy \quad (x^H := \overline{x}^T) $$
defines a sesquilinear form on $\CC^n \times \CC^n$. If $B$ is Hermitian and positive definite, $[ \cdot, \cdot]$ is a scalar product; otherwise $[\cdot, \cdot]$ is often called an indefinite inner/scalar product \cite{Bog74,Mack07}.
Four well-known classes of matrices are related to indefinite scalar products, see also \cite{Mack07}:
\begin{enumerate}[(a)]
\item A matrix $A \in \MM_{n}(\CC)$ is called $B$-selfadjoint if $[Ax,y]=[x,Ay]$ holds for all $x,y \in \CC^{n}$. In other words,
$$x^HA^HBy = x^HBAy$$ is true for $A$ and all $x,y \in \CC^{n}$. This is possible if and only if $A^HB = BA$ holds, that is $A = B^{-1}A^HB$.
\item A matrix $A \in \MM_{n}(\CC)$ is called $B$-skewadjoint if $[Ax,y]=[x,-Ay]$ holds for all $x,y \in \CC^{n}$. It follows analogously to (a) that $A \in \MM_{n}(\CC)$ is skewadjoint if and only if $-A = B^{-1}A^HB$ holds.
\item A matrix $A \in \MM_{n}(\CC)$ is called $B$-unitary if $[Ax,Ay]=[x,y]$ holds for all $x,y \in \CC^{n}$. That is, $x^HA^HBy = x^HBy$ has to hold for $A$ and all vectors $x,y \in \CC^{n}$.
This is possible if and only if $A^HBA = B$.
\item A matrix $A \in \MM_{n}(\CC)$ is called $B$-normal if it holds that $$AB^{-1}A^HB = B^{-1}A^HBA.$$
\end{enumerate}

Indefinite scalar products arise in many mathematical contexts, see for instance \cite{GoLaRod05} for a comprehensive treatment of the Hermitian case $B=B^H$ with a selection of examples and applications.
Two particular choices for $B$ that are very frequently considered are
$$ R_n = \begin{bmatrix} & & 1 \\ & \iddots & \\ 1 & & \end{bmatrix} \in \Gl_n(\CC) \quad \textnormal{and} \quad  J_{2m} = \begin{bmatrix} & I_m \\ - I_m & \end{bmatrix} \in \Gl_{2m}(\CC).$$
Both define indefinite scalar products on $\CC^n \times \CC^n$ and $\CC^{2m} \times \CC^{2m}$, respectively. We call $[ \cdot, \cdot]_{R_n}: (x,y) \mapsto [x,y]_{R_n} = x^HR_ny$ the perplectic and $[ \cdot, \cdot]_{J_{2m}}: (x,y) \mapsto [x,y]_{J_{2m}} = x^HJ_{2m}y$ the symplectic scalar product.
The classes of matrices introduced in (a), (b), (c) and (d) above for the symplectic scalar product are of great importance in control systems theory, algebraic Riccati equations, gyroscopic systems, model reduction, quadratic eigenvalue problems and many more areas, see e.g. \cite{Francis, Lancaster, Mehr91, TissMeer} and the references therein. For the perplectic scalar product, such matrices arise for instance in control of mechanical and electrical vibrations, see e.g. \cite{Mack05,Reid}. In this work, our investigations focus entirely on $B=R_n$ and $B=J_{2m}$.

Notice that the set of $B$-normal matrices includes the sets of $B$-self\-ad\-joint, $B$-skew\-ad\-joint and $B$-unitary matrices. Now assume for a moment that $B = I_n$ is the $n \times n$ identity matrix. Then the sets of $B$-selfadjoint, $B$-skewadjoint, $B$-unitary and $B$-normal matrices $A \in \MM_n(\CC)$ coincide with the sets of Hermitian ($A=A^H$), skew-Hermitian ($A=-A^H$), unitary ($A^HA=I_n$) and normal ($AA^H=A^HA$) matrices. It is well-known that a matrix belonging to any of these sets is unitarily (i.e. $I_n$-unitarily) diagonalizable \cite{Grone}. Thus, their natural canonical form under unitary similarity is the diagonal form. If $B \neq I_n$, a  $B$-normal matrix is in general not diagonalizable by a $B$-unitary similarity (see \cite[Sec.\,9]{PS} and the specific conditions developed therein for $B$-unitary diagonalization). With this in mind, the question of a canonical form for $B$-normal matrices ($B=R_n, J_{2m}$) under $B$-unitary similarity naturally arises.
The following quote is taken from \cite{Mehl}\footnote{In our case, a '$H$-normal matrix' is a '$B$-normal matrix'.} and highlights the difficulties in finding such a canonical form:

\begin{quote}
\emph{``On the other hand, the problem of finding a canonical form for H-normal matrices
has been proven to be as difficult as classifying pairs of commuting matrices under
simultaneous similarity [...]. So far, a classification of H-normal matrices has only
been obtained for some special cases [...].
From this point of view, the set of all H-normal matrices is 'too large' and it
makes sense to look for proper subsets for which a complete classification can be
obtained.''}
\end{quote}

In this work we focus on the two popular cases $B=R_n$ and $B=J_{2m}$ and introduce a canonical form for a proper subset (as suggested in the quote above) of $B$-normal matrices under $B$-unitary similarity.  In particular, we only consider $B$-normal matrices which are nondefective (i.e. diagonalizable). Structured canonical (i.e. Schur, Jordan) forms of matrices belonging to the matrix-classes defined in (a), (b), (c) and (d) above were studied before, see e.g. \cite{ GoLaRod05, Laub, LinMehr, MehrXu, HamSchur} or \cite[Sec.\,7]{Mack05}. In full generality, they can be very complicated or their existence depends on specific properties (e.g. related to the eigenvalues) of the matrix at hand. The existence of the form we develop only depends on whether the matrix at hand is defective or not. Once a $B$-normal matrix does possess a full set of linearly independent eigenvectors (forming a basis of the whole vector space), it is possible to transform it into a sparse and nicely structured form via a $B$-unitary similarity.

Certainly, establishing a canonical form for a subset of $B$-normal matrices is only worth half its value as long as it is unknown how large this subset (for which the form exists) actually is.
The reason why we consider only diagonalizable matrices is, on the one hand, that these form a dense subset in the class of all $B$-normal matrices (and, in addition, they constitute dense subsets among all $B$-selfadjoint, $B$-skewadjoint and $B$-unitary matrices), see \cite{CruzSalten}. Therefore, any $B$-normal matrix is, if it is not diagonalizable, arbitrarily close to a diagonalizable $B$-normal matrix (for which the canonical form does exist).
On the other hand, once we have established the existence of the canonical form for diagonalizable matrices, we will use it to strengthen the results obtained in \cite{CruzSalten}. In fact we are able to show that the set of matrices with pairwise distinct eigenvalues is a topologically open and dense subset of all $B$-normal matrices. This means that its complement is ``nowhere dense'' and therefore, from a topological point of view, is rather ``small''. For this reason, we call these canonical forms ``generic'' for $B$-normal matrices.

In Section \ref{s:perplectic} we show that, assuming diagonalizability, $R_n$-normal matrices can always be transformed into '$X$-form' by a $R_n$-unitary similarity (see Theorem \ref{thm:normal2X}). We say that a matrix $A \in \MM_n(\CC)$ has $X$-form, if it has entries only along its diagonal and anti-diagonal:
$$ A = \left[ \tikz[baseline=+4mm]{ \draw[color=black] (0,1) -- (1,0); \draw[color=black] (0,0) -- (1,1); } \right].$$
We prove in Section \ref{ss:genericity} that the subset of $R_n$-normal matrices, for which such a canonical form exists, is open and dense.

The canonical form under $J_{2m}$-unitary similarity we develop for nondefective $J_{2m}$-normal matrices in Section \ref{s:symplectic} is the 'four-diagonal-form'. We say that $A \in \MM_{2m}(\CC)$ has four-diagonal-form if, considered as a $2 \times 2$ matrix with four $m \times m$ blocks, each of these blocks is diagonal:
\begin{equation} A = \left[ \tikz[baseline=+4mm]{ \draw[color=black] (0,1) -- (1,0); \draw[color=black] (0,0.5) -- (0.5,0); \draw[color=black] (0.5,1) -- (1,0.5); } \right]. \notag \end{equation}
Here, we also prove that this form can be seen as generic for the set of $J_{2m}$-normal matrices. Our proofs in this section will make essential use of the results obtained in Section \ref{s:perplectic}. Some general basics on indefinite scalar products and those matrices related to them are presented in Section \ref{s:general-basics}. Section \ref{s:conc} presents some conclusions.

\section{Basic definitions and notation}
\label{s:general-basics}

The set of all matrices of size $j \times k$  over $\mathbb{F} = \RR, \CC$ is denoted by $\MM_{j \times k}(\mathbb{F})$. For $j=k$, we use the notation $\MM_k(\mathbb{F}) = \MM_{k \times k}(\mathbb{F})$. The general linear group over $\mathbb{F}^k$  (that is, the set of $k \times k$ nonsingular matrices over $\mathbb{F}$) is denoted by $\Gl_k(\mathbb{F})$. The identity matrix of size $k \times k$ is denoted by $I_k$. Whenever $A \in \MM_k(\CC)$, the set of all eigenvalues of $A$ is called the spectrum of $A$ and is denoted $\sigma(A)$. The multiplicity of $\lambda \in \CC$ as a root of $\det(A - x I_k)$ is called the algebraic multiplicity of $\lambda$ (as an eigenvalue of $A$) and is denoted by $\mathfrak{m}(A, \lambda)$. Any vector $v \in \CC^k$ that satisfies $Av = \lambda v$ is called an eigenvector for $\lambda$. The set of all those vectors corresponding to $\lambda \in \sigma(A)$ is called the eigenspace for $\lambda$. The eigenspace of $A$ for $\lambda$ is a subspace of $\CC^k$. Its dimension is called the geometric multiplicity of $\lambda$. The conjugate transpose of a vector/matrix is denoted with the superscript $^H$ while $^T$ is used for the transposition without complex conjugation. A matrix $A \in \MM_n(\CC)$ is called nondefective (or diagonalizable), if there exists some $S \in \Gl_n(\CC)$ such that $S^{-1}AS$ is a diagonal matrix.

In this section, let $B = \pm B^H \in \Gl_n(\CC)$ be some nonsingular matrix and $[ \cdot, \cdot ]_B$ the indefinite scalar product induced by $B$ on $\CC^n \times \CC^n$. In this section we collect some basic results on $B$-selfadjoint, $B$-skewadjoint, $B$-unitary and $B$-normal matrices (recall that those matrices have been defined in Section \ref{s:intro}) and relations for their eigenvalues and eigenvectors. The following Lemma \ref{lem:scalarprod1} states a central property of eigenvectors of $B$-self/skewadjoint matrices related to the scalar product $[ \cdot, \cdot ]_B$.

\begin{lemma} \label{lem:scalarprod1} Let $B= \pm B^H \in \Gl_n(\CC)$ and $A \in \MM_{n}(\CC)$.
\begin{enumerate}[(a)]
\item Suppose $A$ is $B$-selfadjoint and $x,y \in \CC^{n}$ are eigenvectors of $A$ for $\lambda \in \CC$ and $\mu \in \CC$, respectively. Then $x^HBy \neq 0$ implies $\mu = \overline{\lambda}$.
\item Suppose $A$ is $B$-skewadjoint and $x,y \in \CC^{n}$ are eigenvectors of $A$ for $\lambda \in \CC$ and $\mu \in \CC$, respectively. Then $x^HBy \neq 0$ implies $\mu = -\overline{\lambda}$.
\end{enumerate}
\end{lemma}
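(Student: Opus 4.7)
The proof plan for both parts is a short computation exploiting the defining algebraic identities of the adjointness classes. For part (a), the hypothesis that $A$ is $B$-selfadjoint translates (as recorded in item (a) of the introduction) into the matrix identity $A^HB = BA$. I would evaluate the scalar $x^HA^HBy$ in two ways: first, since $Ax = \lambda x$, we have $x^HA^H = \overline{\lambda}x^H$, which yields $x^HA^HBy = \overline{\lambda}\,(x^HBy)$; second, replacing $A^HB$ by $BA$ and using $Ay = \mu y$, we get $x^HBAy = \mu\,(x^HBy)$. Equating the two expressions gives $(\overline{\lambda}-\mu)\,x^HBy = 0$, and the assumption $x^HBy \neq 0$ forces $\mu = \overline{\lambda}$.

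For part (b), the argument is identical in structure, with the only change being the sign produced by $B$-skewadjointness. Here the defining identity is $A^HB = -BA$, so the computation $x^HA^HBy = -x^HBAy$ produces $\overline{\lambda}\,(x^HBy) = -\mu\,(x^HBy)$, and once more the hypothesis $x^HBy \neq 0$ lets us cancel $x^HBy$ to obtain $\mu = -\overline{\lambda}$.

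There is no genuine obstacle in this proof; the lemma is essentially a rewriting of the $B$-self/skewadjointness condition applied to a pair of eigenvectors, and the extra hypothesis $B = \pm B^H$ assumed at the top of the section is not needed for the argument itself. I would simply present the two short calculations in sequence, perhaps emphasizing at the start that both proofs are direct consequences of the identities $A^HB = BA$ and $A^HB = -BA$ that appear in the definitions of $B$-selfadjoint and $B$-skewadjoint matrices, respectively.
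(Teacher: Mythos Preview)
Your proof is correct and follows essentially the same approach as the paper: the paper writes the same identity in scalar-product notation, chaining $\overline{\lambda}\,[x,y]_B = [Ax,y]_B = [x,Ay]_B = \mu\,[x,y]_B$, which is exactly your computation of $x^HA^HBy$ in two ways. For part (b) the paper simply notes that the argument proceeds analogously, as you do.
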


\begin{proof}
(a) Under the given assumptions we have
$$ \overline{\lambda} \cdot [x,y]_B = [\lambda x,y]_B = [Ax,y]_B = [x,Ay]_B = [x,\mu y]_B = \mu \cdot [x,y]_B$$
thus $\mu = \overline{\lambda}$ has to hold whenever $[x,y]_B \neq 0$. The proof for (b) proceeds analogously.
\end{proof}

For any $A \in \MM_n(\CC)$ let $A^\star := B^{-1}A^HB$. The matrix $A^\star$ is usually referred to as the adjoint for $A$ \cite[Sec.\,2]{Mack07}. The sets of all $B$-selfadjoint, $B$-skewadjoint, $B$-unitary and $B$-normal matrices can now equivalently be characterized by the equations $A=A^\star$, $A=-A^\star$, $A^\star = A^{-1}$ and $AA^\star = A^\star A$, respectively. In particular, notice that a $B$-unitary matrix is always nonsingular. For a $B$-selfadjoint matrix $A = A^\star$ we have
\begin{equation} \sigma(A) = \sigma(A^\star) =  \sigma(B^{-1}A^HB)=  \sigma(A^H) =  \overline{\sigma(A)} \label{equ:spectrum}
\end{equation}
so $\sigma(A)$ consists of tuples $(\lambda, \overline{\lambda})$ with $\lambda$ and $\overline{\lambda}$ having the same algebraic multiplicities. Analogously we obtain the eigenvalue pairings $(\lambda, -\overline{\lambda})$ and $(\lambda, 1/ \overline{\lambda})$ for $B$-skewadjoint and $B$-unitary matrices, respectively \cite{Mack06, Mack07}. Notice that, given two eigenvectors $x,y \in \CC^{2n}$ of some $B$-selfadjoint matrix $A$ for the eigenvalue $\lambda$, Lemma \ref{lem:scalarprod1} guarantees that $[x,y] \neq 0$ can only hold if $\lambda = \overline{\lambda}$. Therefore $[x,y] \neq 0$ implies $\lambda \in \RR$.

The proof of Theorem \ref{thm:sylvester-inertia} can be found in \cite[Sec.\,4.5]{HoJo}. The proof for (b) follows in the same way by noting that $iA$ is skew-Hermitian whenever $A$ is Hermitian and vice versa.

\begin{theorem} \label{thm:sylvester-inertia}
\begin{enumerate}
\item Let $A = A^H \in \Gl_n(\CC)$. Let $m_{+} \geq 0$ and $m_{-} \geq 0$ be the numbers of positive and negative eigenvalues of $A$, respectively. Then there exists some $Q \in \Gl_n(\CC)$ such that
$$ Q^HAQ = \begin{bmatrix} +I_{m_{+}} & \\ & -I_{m_{-}} \end{bmatrix}.$$
\item Let $A = -A^H \in \Gl_n(\CC)$. Let $m_{+} \geq 0$ and $m_{-} \geq 0$ be the numbers of positive and negative imaginary eigenvalues of $A$, respectively. Then there exists some $Q \in \Gl_n(\CC)$ such that
$$ Q^HAQ = \begin{bmatrix} +iI_{m_{+}} & \\ & -iI_{m_{-}} \end{bmatrix}.$$
\end{enumerate}
\end{theorem}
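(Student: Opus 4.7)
The plan is to deduce both parts from the spectral theorem for Hermitian matrices and then rescale the eigenvalues.

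For part (a), I would first invoke the spectral theorem to find some unitary $U \in \Gl_n(\CC)$ with $U^H A U = D$, where $D$ is a real diagonal matrix whose diagonal entries are precisely the eigenvalues of $A$ (counted with multiplicity). Since $A$ is nonsingular, none of these is zero, so $m_+ + m_- = n$. Using a suitable permutation matrix $P$ (which is also unitary), I would arrange the diagonal of $P^H D P$ to take the form
\[ \textnormal{diag}(\lambda_1, \dots, \lambda_{m_+}, -\mu_1, \dots, -\mu_{m_-}) \]
with $\lambda_i > 0$ and $\mu_j > 0$. Finally, the diagonal congruence with
\[ S = \textnormal{diag}\bigl( 1/\sqrt{\lambda_1}, \dots, 1/\sqrt{\lambda_{m_+}}, 1/\sqrt{\mu_1}, \dots, 1/\sqrt{\mu_{m_-}} \bigr) \]
yields $S^H P^H D P S = \textnormal{diag}(I_{m_+}, -I_{m_-})$. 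Setting $Q := UPS \in \Gl_n(\CC)$ completes (a).

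For part (b), I would exploit the observation already noted in the excerpt: if $A = -A^H$, then $iA$ is Hermitian, and its eigenvalues are the numbers $-\beta$ where $i\beta$ (with $\beta \in \RR$) runs through the eigenvalues of $A$. Consequently, the positive imaginary eigenvalues of $A$ correspond bijectively to the negative eigenvalues of $iA$ and vice versa. Applying part (a) to $iA$ therefore produces some $Q_0 \in \Gl_n(\CC)$ with
\[ Q_0^H (iA) Q_0 = \textnormal{diag}(I_{m_-}, -I_{m_+}), \quad \text{equivalently} \quad Q_0^H A Q_0 = \textnormal{diag}(-iI_{m_-}, iI_{m_+}). \]
A final permutation similarity with an appropriate block-swap $P_0$ gives $Q^H A Q = \textnormal{diag}(iI_{m_+}, -iI_{m_-})$ for $Q := Q_0 P_0$.

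There is no real obstacle in this argument; every ingredient (spectral theorem, diagonal rescaling, permutation) is entirely routine. The only point that deserves a moment of attention is keeping track of the sign conventions in (b), namely that multiplication by $i$ turns the \emph{positive} imaginary spectrum of $A$ into the \emph{negative} real spectrum of $iA$, which is why the final permutation step is needed to match the exact form stated in the theorem.
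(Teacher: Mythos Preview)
Your proposal is correct and matches the paper's approach exactly: the paper does not write out a proof but simply cites \cite[Sec.\,4.5]{HoJo} for part~(a) --- which is precisely the spectral-theorem-plus-rescaling argument you give --- and for part~(b) remarks that one passes to the Hermitian matrix $iA$, which is again exactly your reduction (including the sign bookkeeping you flagged).
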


The following two well-known results will also be used frequently in the next section.
We call two matrices $A_1,A_2 \in \MM_n(\CC)$ simultaneously diagonalizable, if there exists a single matrix $S \in \Gl_n(\CC)$ such that $S^{-1}A_jS$ is a diagonal matrix for $j=1, 2$. The following Lemma \ref{lem:simult-diag}, see \cite[Thm.\,1.3.21]{HoJo}, relates simultaneous diagonalization to commutativity of matrices. Lemma 3 below, see \cite[Thm.\,1.3.10]{HoJo}, makes a statement about the diagonalizability of block-diagonal matrices.

\begin{lemma} \label{lem:simult-diag}
Assume $A_1,A_2 \in \MM_n(\CC)$ are both diagonalizable. Then $A_1$ and $A_2$ commute if and only if they are simultaneously diagonalizable.
\end{lemma}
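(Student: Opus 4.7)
The plan is to prove both implications of the equivalence, where the forward direction (simultaneous diagonalizability implies commutativity) is straightforward while the converse is the substantive content. For the easy direction, I would write $S^{-1}A_jS = D_j$ for $j=1,2$ with $D_j$ diagonal, observe that diagonal matrices always commute, and conclude $A_1A_2 = S D_1 D_2 S^{-1} = S D_2 D_1 S^{-1} = A_2 A_1$.

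For the nontrivial direction, assume $A_1 A_2 = A_2 A_1$. The strategy is to first diagonalize $A_1$ and then exploit commutativity to decompose $A_2$ accordingly. Pick $P \in \Gl_n(\CC)$ such that $P^{-1}A_1 P = D_1 = \text{diag}(\lambda_1 I_{n_1}, \ldots, \lambda_k I_{n_k})$, where $\lambda_1, \ldots, \lambda_k$ are the \emph{distinct} eigenvalues of $A_1$ and the columns of $P$ are grouped by eigenspace. Setting $\widetilde{A}_2 := P^{-1}A_2 P$, the commutation relation transports to $D_1 \widetilde{A}_2 = \widetilde{A}_2 D_1$. Partitioning $\widetilde{A}_2 = [C_{ij}]_{i,j=1}^k$ conformally with the block structure of $D_1$, a direct comparison of $(i,j)$-blocks gives $\lambda_i C_{ij} = \lambda_j C_{ij}$, and since $\lambda_i \neq \lambda_j$ for $i \neq j$, we conclude $C_{ij} = 0$ for $i\neq j$. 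Thus $\widetilde{A}_2 = \text{diag}(C_{11}, \ldots, C_{kk})$ is block diagonal.

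The next step is to diagonalize each diagonal block $C_{ii}$ independently. Since $\widetilde{A}_2$ is similar to the diagonalizable matrix $A_2$, it is itself diagonalizable; and a block diagonal matrix is diagonalizable if and only if each of its diagonal blocks is diagonalizable. Hence for every $i$ there exists $Q_i \in \Gl_{n_i}(\CC)$ with $Q_i^{-1} C_{ii} Q_i$ diagonal. Defining $Q := \text{diag}(Q_1, \ldots, Q_k)$, the matrix $Q^{-1}\widetilde{A}_2 Q$ is diagonal. Crucially, because each block of $D_1$ is a scalar multiple $\lambda_i I_{n_i}$ of the identity, $Q$ commutes with $D_1$, and therefore $Q^{-1}D_1 Q = D_1$ remains diagonal. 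Setting $S := PQ$ then yields simultaneous diagonalization of $A_1$ and $A_2$.

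The only point that needs a small auxiliary argument is the claim that each diagonal block of a block-diagonal diagonalizable matrix is itself diagonalizable; this is the main conceptual hurdle. It can be established either by a minimal-polynomial argument (the minimal polynomial of $\widetilde{A}_2$ splits into distinct linear factors, and the minimal polynomial of each $C_{ii}$ divides it) or by counting linearly independent eigenvectors within each block. Everything else is bookkeeping with block matrices and the standard observation that commutation with $D_1$ forces the off-diagonal blocks to vanish.
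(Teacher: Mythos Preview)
Your argument is correct and is essentially the standard proof (the one in Horn and Johnson, Theorem 1.3.21). The paper itself does not supply a proof of this lemma; it merely cites that reference. Incidentally, the auxiliary fact you flag as the only conceptual hurdle---that each diagonal block of a diagonalizable block-diagonal matrix is itself diagonalizable---is exactly the content of Lemma~\ref{lem:block-diag} in the paper, so you could simply invoke that.
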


\begin{lemma} \label{lem:block-diag}
Let $A_1 \in \MM_{m_1}(\CC), \ldots , A_k \in \MM_{m_k}(\CC)$ be some matrices and set $m :=m_1 + \cdots + m_k$. Define
$$ A = \begin{bmatrix} A_1 & & \\ & \ddots & \\ & & A_k \end{bmatrix}  \in \MM_m(\CC).$$
Then $A$ is diagonalizable if and only if all $A_j, j=1, \ldots , k$, are diagonalizable.
\end{lemma}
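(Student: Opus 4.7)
The plan is to prove both implications of the claimed equivalence separately, with the easy direction first and then the more delicate converse.

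For the forward direction, assume each $A_j$ is diagonalizable. Then for each $j$ I pick $S_j \in \Gl_{m_j}(\CC)$ such that $S_j^{-1}A_jS_j = D_j$ is diagonal. Setting $S = \textnormal{diag}(S_1, \ldots, S_k) \in \Gl_m(\CC)$, an immediate block computation shows $S^{-1}AS = \textnormal{diag}(D_1, \ldots, D_k)$, which is diagonal. This part involves no real obstacle.

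For the converse, assume $A$ is diagonalizable. I would use the characterization that a matrix is diagonalizable if and only if its geometric and algebraic multiplicities coincide for every eigenvalue in its spectrum. First, since the characteristic polynomial of $A$ factors as $\det(xI_m - A) = \prod_{j=1}^k \det(xI_{m_j} - A_j)$, the spectrum of $A$ is the union $\sigma(A) = \bigcup_j \sigma(A_j)$, and for any $\lambda \in \sigma(A)$ one has $\mathfrak{m}(A,\lambda) = \sum_{j=1}^k \mathfrak{m}(A_j,\lambda)$, with the convention $\mathfrak{m}(A_j,\lambda) = 0$ if $\lambda \notin \sigma(A_j)$. Next, the block-diagonal structure implies that the eigenspace of $A$ for $\lambda$ decomposes as a direct sum of the eigenspaces of the individual $A_j$ for $\lambda$ (padded by zeros in the other block positions), so the geometric multiplicities also add: $\dim\ker(A - \lambda I_m) = \sum_{j=1}^k \dim\ker(A_j - \lambda I_{m_j})$.

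Combining these two observations with the standard inequality $\dim\ker(A_j - \lambda I_{m_j}) \leq \mathfrak{m}(A_j,\lambda)$ and the assumed equality $\dim\ker(A - \lambda I_m) = \mathfrak{m}(A,\lambda)$ for all $\lambda \in \sigma(A)$, I conclude that each individual inequality must actually be an equality. Hence every $A_j$ has geometric multiplicity equal to algebraic multiplicity for all its eigenvalues and is therefore diagonalizable. The only step requiring any care is the additivity of the geometric multiplicities, which I would make rigorous by explicitly noting that a vector $v = (v_1, \ldots, v_k)^T$ (block-partitioned according to the sizes $m_1, \ldots, m_k$) satisfies $Av = \lambda v$ iff $A_jv_j = \lambda v_j$ for each $j$; this is the mildest technical point but presents no real obstacle.
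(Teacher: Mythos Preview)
Your proof is correct. The paper does not actually supply its own proof of this lemma; it simply cites Theorem~1.3.10 of Horn and Johnson's \emph{Matrix Analysis} and states the result without argument. Your approach---an explicit block-diagonal similarity for one direction, and for the other the additivity of both algebraic and geometric multiplicities across diagonal blocks combined with the inequality $\dim\ker(A_j-\lambda I_{m_j})\le\mathfrak{m}(A_j,\lambda)$---is a standard and complete proof of the fact.
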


The following result can easily be verified by a straight forward calculation (see also \cite[Sec.\,1]{Mehl} or \cite[Lem.\,2]{CruzSalten}). In particular it shows that the four classes of matrices related to an indefinite scalar product $[x,y]_B = x^HBy$ are preserved under $B$-unitary similarity ($B=\pm B^H$ is not required for the result to hold). The proof is omitted.

\begin{lemma} \label{lem:switch2}
Let $B \in \Gl_n(\CC)$ and $A \in \MM_n(\CC)$. Furthermore, let $T \in \Gl_n(\CC)$ and consider
$$ A' := T^{-1}AT \quad \textnormal{and} \quad B' := T^HBT.$$
Then $A$ is $B$-selfadjoint/$B$-skewadjoint/$B$-unitary/$B$-normal if and only if $A' = T^{-1}AT$ is $B'$-selfadjoint/$B'$-skew-adjoint/$B'$-unitary/$B'$-normal.
\end{lemma}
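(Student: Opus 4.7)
The plan is to reduce all four equivalences to a single key identity: under the change of variables $A' = T^{-1}AT$, $B' = T^HBT$, the $B'$-adjoint of $A'$ is precisely the $T$-conjugate of the $B$-adjoint of $A$. That is, writing $\star$ for the $B$-adjoint and $\star'$ for the $B'$-adjoint, I claim
\[
(A')^{\star'} = T^{-1} A^\star T.
\]
This is a direct computation: $(A')^{\star'} = (B')^{-1}(A')^H B' = (T^HBT)^{-1}(T^{-1}AT)^H(T^HBT) = T^{-1}B^{-1}T^{-H} \cdot T^H A^H T^{-H} \cdot T^H B T = T^{-1}(B^{-1}A^HB) T = T^{-1} A^\star T$. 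No hypothesis on $B$ (such as $B = \pm B^H$) is needed for this step.

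Once this identity is in hand, each of the four characterizations falls out immediately by recalling from Section~\ref{s:general-basics} that $B$-selfadjointness, $B$-skewadjointness, $B$-unitarity and $B$-normality are the equations $A = A^\star$, $A = -A^\star$, $A^\star = A^{-1}$ and $AA^\star = A^\star A$, respectively. For the selfadjoint case, $A = A^\star$ holds if and only if $T^{-1}AT = T^{-1}A^\star T$, which by the identity is $A' = (A')^{\star'}$; the skewadjoint case is identical up to a sign. For the unitary case, one additionally uses $T^{-1}A^{-1}T = (T^{-1}AT)^{-1} = (A')^{-1}$, so $A^\star = A^{-1}$ translates to $(A')^{\star'} = (A')^{-1}$. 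For normality, one conjugates both sides of $AA^\star = A^\star A$ by $T$, inserting $TT^{-1} = I_n$ between the two factors, to obtain $A'(A')^{\star'} = (A')^{\star'} A'$.

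There is no real obstacle; the entire content is the computation of $(A')^{\star'}$, and the rest is a formal manipulation of the defining equations. The only point where one needs to be a little careful is to track the transposes and inverses correctly in $(T^HBT)^{-1} = T^{-1}B^{-1}T^{-H}$, but this is purely bookkeeping.
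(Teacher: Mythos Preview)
Your argument is correct and is exactly the ``straight forward calculation'' the paper alludes to; in fact the paper omits the proof entirely, merely referring to \cite[Sec.\,1]{Mehl} and \cite[Lem.\,2]{CruzSalten}. Your key identity $(A')^{\star'} = T^{-1}A^\star T$ is the natural way to organize the computation, and the four cases follow just as you describe.
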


\subsection{Basic properties of matrices related to $[ \cdot, \cdot]_{R_n}$}
\label{ss:basics-perp}

For any $n \in \mathbb{N}$ let the Hermitian (i.e. real symmetric) matrix $R_n$ be defined as
$$ R_n = \begin{bmatrix} & & 1 \\ & \iddots & \\ 1 & & \end{bmatrix} \in \Gl_n(\RR).$$
The eigenvalues of $R_k$ are $+1$ and $-1$ with multiplicities $\mathfrak{m}(R_n,+1)=\lceil n/2 \rceil$  and $\mathfrak{m}(R_n,-1)=\lfloor n/2 \rfloor$. It is common to refer to $R_n$-self/skewadjoint matrices as per/perskew-Hermitian, respectively. Matrices that are $R_n$-unitary are in general called perplectic \cite{Mack05}. Whenever we are considering per/perskew-Hermitian and perplectic matrices of different sizes, the value of $n$ will always be clear from the context, i.e. from the size of the matrix at hand.
In this section we introduce some notational simplifications to keep the terminology in Section \ref{s:perplectic} as compact as possible

For two matrices $P \in \MM_{2\ell}(\CC)$ and $Q \in \MM_k(\CC)$ with $P$ block-partitioned as a $2 \times 2$ matrix with $\ell \times \ell$ blocks, i.e.
\begin{equation} P = \begin{bmatrix} P_{11} & P_{12} \\ P_{21} & P_{22} \end{bmatrix} \in \MM_{2\ell}(\CC), \quad P_{ij} \in \MM_\ell(\CC), \label{equ:partition-P} \end{equation}
we define the perplectic sum $P \boxdot Q$ of $P$ and $Q$ as
\begin{equation} P \boxdot Q = \begin{bmatrix} P_{11} & & P_{12} \\ & Q & \\ P_{21} & & P_{22} \end{bmatrix} \in \MM_{n}(\CC), \quad (2\ell + k=n). \label{equ:perpsum-def} \end{equation}
Note that $P \boxdot Q$ is only defined if the size of $P$ is even\footnote{Whenever we use the perplectic sum $P \boxdot Q$ without further notice, the size of $P$ will always be even and clear from the context.}. Analogously to the Kronecker product of matrices, there are some properties of the perplectic sum that follow immediately from its definition \eqref{equ:perpsum-def}. We use the notation $\oplus$ to denote the direct sum of two matrices, i.e. $P \oplus Q = \textnormal{diag}(P,Q)$. The proof of Remark \ref{rem:perp_properties} is omitted.

\begin{remark} \label{rem:perp_properties}
Let $k, \ell \in \mathbb{N}_0$ and $n \in \mathbb{N}$ such that $2 \ell + k = n$.
\begin{enumerate}[(a)]
\item For $P \in \MM_{2\ell}(\CC)$ as in \eqref{equ:partition-P} and $Q \in \MM_k(\CC)$ it holds that $(P \boxdot Q)^H = P^H \boxdot Q^H$. Moreover, $(P \boxdot Q)^{-1} = P^{-1} \boxdot Q^{-1}$ if $P^{-1}$ and $Q^{-1}$ exist.
\item If $P,S \in \MM_{2\ell}(\CC)$ (both interpreted as $2 \times 2$ matrices with $\ell \times \ell$ blocks) and $Q,W \in \MM_k(\CC)$ then
\begin{equation} \big( P \boxdot Q \big) \big( S \boxdot W \big) = PS \boxdot QW. \label{equ:perpsum_1} \end{equation}
In particular, $P \boxdot Q$ and $S \boxdot W$ commute if and only if $PS = SP$ and $QW = WQ$ hold.
\item  For $P \in \MM_{2\ell}(\CC)$ as in \eqref{equ:partition-P} and $Q \in \MM_k(\CC)$, the perplectic sum $P \boxdot Q \in \MM_n(\CC)$ is per(skew)-Hermitian if and only if $P$ and $Q$ are both per(skew)-Hermitian (with respect to $R_{2\ell}$ and $R_k$, respectively).
\item For $P \in \MM_{2\ell}(\CC)$, $Q \in \MM_k(\CC)$ and their perplectic sum $P \boxdot Q \in \MM_n(\CC)$, there exists a permutation matrix $R \in \Gl_n(\CC)$ so that $R^{-1}(P \boxdot Q) R = P \oplus Q$.
\end{enumerate}
\end{remark}

As the Kronecker product, the perplectic sum $\boxdot$ is not commutative. For abbreviation, we sometimes use the notation
\begin{equation}  \boxdot_{i=1}^k P_i = P_1 \boxdot P_2 \boxdot \cdots \boxdot P_k := ( P_1 \boxdot P_2) \boxdot P_3) \boxdot \cdots ) \boxdot P_k \label{equ:perpsum-notation} \end{equation}
The following Lemma \ref{lem:perplectic} summarizes some properties of perplectic matrices with respect to the operations $\oplus$ and $\boxdot$. Here and in the following we use the notation $S^{-\star}$ to denote $(S^{-1})^\star = (S^\star)^{-1}$.

\begin{lemma} \label{lem:perplectic}
Let $k, \ell \in \mathbb{N}_0$ and $n \in \mathbb{N}$ such that $2 \ell + k = n$.
\begin{enumerate}[(a)]
\item  For any matrix $S \in \Gl_\ell(\CC)$ and any perplectic matrix $Q \in \Gl_k(\CC)$, the matrix $P := ( S \oplus S^{-\star} ) \boxdot Q \in \Gl_n(\CC)$ is perplectic. In particular, $P = (I_\ell \oplus I_\ell) \boxdot Q$ and, if $n = 2\ell$, $P = S \oplus S^{-\star}$ are perplectic.
\item For any two perplectic matrices $S \in \Gl_{2\ell}(\CC)$ and $Q \in \Gl_k(\CC)$, the matrix $P := S \boxdot Q \in \Gl_{n}(\CC)$ is perplectic.
\item For any number $j \in \mathbb{N}$ of perplectic matrices $P_1, P_2, \ldots , P_j \in \Gl_n(\CC)$ their product $P := P_1P_2 \cdots P_j$ is perplectic.
\end{enumerate}
\end{lemma}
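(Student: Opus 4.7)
The plan is to handle the three parts in the order (c), (b), (a), since each part essentially reduces to the previous one combined with the multiplicative properties of $\boxdot$ collected in Remark \ref{rem:perp_properties}. The key observation that drives everything is that $R_n$ itself decomposes as a perplectic sum, namely $R_n = R_{2\ell} \boxdot R_k$ whenever $n = 2\ell + k$; this is immediate from the definition \eqref{equ:perpsum-def} once one writes out $R_{2\ell}$ in its $2\times 2$ block form with antidiagonal blocks $R_\ell$.

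For (c) I would proceed by straightforward induction on $j$. The base case $j=1$ is trivial, and for $j=2$ one computes $(P_1P_2)^HR_n(P_1P_2) = P_2^H(P_1^HR_nP_1)P_2 = P_2^HR_nP_2 = R_n$, after which the induction step is immediate by grouping $P_1\cdots P_{j-1}$ and $P_j$.

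For (b), using Remark \ref{rem:perp_properties}(a) and (b) together with the decomposition $R_n = R_{2\ell} \boxdot R_k$, I compute directly:
\begin{equation*}
(S \boxdot Q)^H R_n (S \boxdot Q) = (S^H \boxdot Q^H)(R_{2\ell} \boxdot R_k)(S \boxdot Q) = (S^H R_{2\ell} S) \boxdot (Q^H R_k Q) = R_{2\ell} \boxdot R_k = R_n,
\end{equation*}
where the middle equality uses \eqref{equ:perpsum_1} twice and the fact that $S$ is perplectic with respect to $R_{2\ell}$ and $Q$ with respect to $R_k$.

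For (a), by (b) it suffices to verify that the $2\ell \times 2\ell$ matrix $S \oplus S^{-\star}$ is perplectic with respect to $R_{2\ell}$, since then $P = (S \oplus S^{-\star}) \boxdot Q$ is a perplectic sum of two perplectic matrices. Recalling that $S^\star = R_\ell^{-1}S^HR_\ell$ and hence $S^{-\star} = R_\ell^{-1}S^{-H}R_\ell$, a direct block multiplication using $R_{2\ell} = \left[\begin{smallmatrix} 0 & R_\ell \\ R_\ell & 0 \end{smallmatrix}\right]$ gives
\begin{equation*}
(S \oplus S^{-\star})^H R_{2\ell} (S \oplus S^{-\star}) = \begin{bmatrix} 0 & S^H R_\ell S^{-\star} \\ (S^{-\star})^H R_\ell S & 0 \end{bmatrix} = \begin{bmatrix} 0 & R_\ell \\ R_\ell & 0 \end{bmatrix} = R_{2\ell},
\end{equation*}
where both nontrivial blocks collapse to $R_\ell$ after substituting $S^{-\star} = R_\ell^{-1}S^{-H}R_\ell$ and cancelling. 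The stated special cases then follow by taking $S = I_\ell$ and by noting that for $k=0$ the perplectic sum reduces to $S \oplus S^{-\star}$ itself. There is no genuine obstacle here; everything is a routine block computation, and the only point requiring slight care is getting the definition of $S^{-\star}$ consistent with the ambient $R_\ell$-adjoint when $S$ is merely an arbitrary invertible $\ell \times \ell$ matrix rather than one satisfying any symmetry condition.
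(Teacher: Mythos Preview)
Your proof is correct and follows essentially the same approach as the paper: direct block computations using the multiplicative properties of $\boxdot$ from Remark~\ref{rem:perp_properties} together with the decomposition $R_n = R_{2\ell} \boxdot R_k$. The only cosmetic difference is that you reorder the parts as (c), (b), (a) and reduce (a) to (b) after checking that $S \oplus S^{-\star}$ is perplectic, whereas the paper carries out the full $\boxdot$-computation for (a) directly; the underlying calculation is identical.
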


\begin{proof}
All statements follow from straight forward calculations using Remark \ref{rem:perp_properties} (a) and (b):

(a) Noting that $S^{-\star} = R_\ell S^{-H} R_\ell$ we obtain
\begin{align*}
P^HR_nP &= \bigg( \big( S^H \oplus (S^{-\star})^H \big) \boxdot Q^H \bigg) \big( R_{2\ell} \boxdot R_k \big) \bigg( \big( S \oplus S^{-\star} \big) \boxdot Q \bigg) \\
&= \begin{bmatrix} S^H & \\ & (S^{-\star})^H \end{bmatrix} \begin{bmatrix} & R_\ell \\ R_\ell & \end{bmatrix} \begin{bmatrix} S & \\ & S^{- \star} \end{bmatrix} \boxdot Q^HR_k Q \\
&= \begin{bmatrix} & S^HR_\ell S^{-\star} \\ (S^{-\star})^HR_\ell S & \end{bmatrix} \boxdot R_k = R_{2\ell} \boxdot R_k = R_n
\end{align*}
where we used that $S^HR_\ell S^{-\star} = S^H R_\ell R_\ell S^{-H} R_\ell = S^H S^{-H} R_\ell = R_\ell$.

(b) If $S \in \Gl_{2\ell}(\CC)$ and $Q \in \Gl_k(\CC)$ are both perplectic, we obtain
\begin{align*} P^HR_nP &= \big(S^H \boxdot Q^H \big) \big( R_{2\ell} \boxdot R_k \big) \big( S \boxdot Q \big) = S^HR_{2\ell}S \boxdot Q^HR_kQ \\  &= R_{2\ell} \boxdot R_k = R_n. \end{align*}

(c) Whenever $P_1, P_2 \in \Gl_n(\CC)$ are both perplectic, then for $\tilde{P} := P_1P_2$ it holds that
$$\tilde{P}^HR_n \tilde{P} = (P_1P_2)^H R_n (P_1P_2) = P_2^H \big( P_1^HR_n P_1 \big) P_2 = P_2^H R_n P_2 = R_n,$$
so $\tilde{P}$ is perplectic. Analogously, the product of more than two perplectic matrices is perplectic.
\end{proof}

\section{A canonical form for $R_n$-normal matrices}
\label{s:perplectic}

In this section we develop a canonical form for nondefective (that is, diagonalizable) $R_n$-normal matrices under perplectic similarity. Our main result is Theorem \ref{thm:normal2X} prior to which we present several auxiliary results in order to keep the proof as compact as possible. We will use these results also in Section \ref{s:symplectic} where we consider the symplectic scalar product.

Our first observation is stated in Theorem \ref{thm:single-transform}. It shows that any diagonalizable per-Hermitian matrix $A \in \MM_n(\CC)$ is always perplectic similar to a perplectic sum of a diagonal matrix and a per-Hermitian matrix with only real eigenvalues.

\begin{theorem} \label{thm:single-transform}
Let $A \in \MM_n(\CC)$ be per-Hermitian and diagonalizable. Assume $\lambda_1, \ldots , \lambda_p \in \sigma(A)$ with multiplicities $\mathfrak{m}(A, \lambda_j) = m_j$, $j=1, \ldots , p$, are the distinct eigenvalues of $A$ with positive imaginary parts and set $m := m_1 + \cdots + m_p$.
Then there exists a perplectic matrix $P \in \Gl_n(\CC)$ such that
\begin{equation} P^{-1}AP = \begin{bmatrix} D & \\ & D^\star \end{bmatrix} \boxdot \hat{A} = \begin{bmatrix} D & & \\ & \hat{A} & \\ & & D^\star \end{bmatrix} \label{equ:transform1} \end{equation}
where $D = \lambda_1 I_{m_1} \oplus \cdots \oplus \lambda_p I_{m_p} \in \MM_m(\CC)$ and $\hat{A} \in \MM_{n-2m}(\CC)$ is per-Hermitian with only real eigenvalues\footnote{Note that for a diagonal matrix $D$ we obtain $D^\star$ simply through complex conjugation of the diagonal entries and reversing their order.}.
\end{theorem}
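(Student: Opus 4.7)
The proof builds $P$ from a carefully ordered basis of $\CC^n$. Per-Hermiticity together with \eqref{equ:spectrum} gives $\sigma(A)=\overline{\sigma(A)}$, so the non-real spectrum of $A$ splits into conjugate pairs $(\lambda_j,\overline{\lambda_j})$ of equal algebraic (hence, by diagonalizability, geometric) multiplicity. Consequently
\[
\CC^n \;=\; \bigoplus_{j=1}^p V_{\lambda_j} \;\oplus\; \bigoplus_{j=1}^p V_{\overline{\lambda_j}} \;\oplus\; W,
\]
where $W$ is the direct sum of the eigenspaces of $A$ for its real eigenvalues.

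Lemma \ref{lem:scalarprod1}(a) shows that $[\cdot,\cdot]_{R_n}$ vanishes on $V_\alpha\times V_\beta$ whenever $\beta\neq\overline{\alpha}$; in particular every $V_{\lambda_j}$ is $R_n$-isotropic and $R_n$-orthogonal to $W$. Since $R_n$ is nondegenerate on $\CC^n$, both the pairing $V_{\lambda_j}\times V_{\overline{\lambda_j}}\to\CC$ and the restriction of $[\cdot,\cdot]_{R_n}$ to $W$ are forced to be nondegenerate. I fix an arbitrary basis $e_1^j,\ldots,e_{m_j}^j$ of $V_{\lambda_j}$ and then its $R_{m_j}$-dual basis $f_1^j,\ldots,f_{m_j}^j$ of $V_{\overline{\lambda_j}}$ (which exists by the nondegeneracy just noted), characterized by $(e_i^j)^H R_n f_k^j = (R_{m_j})_{ik}$. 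For $W$ I use a signature match: each hyperbolic piece $V_{\lambda_j}\oplus V_{\overline{\lambda_j}}$ carries $R_n$-signature $(m_j,m_j)$, so the restriction of $R_n$ to $W$ has signature $(\lceil n/2\rceil - m,\,\lfloor n/2\rfloor - m)$, which coincides with the signature of $R_{n-2m}$ because $n$ and $n-2m$ share the same parity. Theorem \ref{thm:sylvester-inertia}(a) then supplies a basis $w_1,\ldots,w_{n-2m}$ of $W$ whose $R_n$-Gram matrix equals $R_{n-2m}$.

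Now assemble $P$ by concatenating, in this order: the $e$-bases grouped by $j=1,\ldots,p$; the vectors $w_1,\ldots,w_{n-2m}$; and finally the $f$-bases grouped in reversed block order $j=p,p-1,\ldots,1$ (with the internal order within each block fixed by the duality above). All column-column $R_n$-inner products then match the entries of $R_n$ by construction, so $P^HR_nP=R_n$ and $P$ is perplectic. The first $m$ and last $m$ columns of $P$ are eigenvectors of $A$, and the middle $n-2m$ columns span the $A$-invariant subspace $W$, so $P^{-1}AP$ takes the three-block diagonal shape of \eqref{equ:transform1}; the reversal of the $f$-block order is precisely what turns the outer block into $D^\star = R_m\overline{D}R_m$ rather than just $\overline{D}$. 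Finally, Lemma \ref{lem:switch2} ensures $P^{-1}AP$ is per-Hermitian, so Remark \ref{rem:perp_properties}(c) forces the middle block $\hat{A}$ to be per-Hermitian with respect to $R_{n-2m}$; its spectrum is $\{\mu_1,\ldots,\mu_q\}\subset\RR$ by construction.

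\textbf{Main obstacle.} The principal difficulty I anticipate is bookkeeping: one must coordinate the ordering of the $f$-blocks, the internal ordering within each block, and the dual-basis convention so that $P^HR_nP$ equals $R_n$ \emph{exactly} (not merely up to a permutation) \emph{and} the outer block of $P^{-1}AP$ equals $D^\star$ as stated rather than $\overline{D}$ or a conjugate transpose thereof. The signature verification for $W$ also rests on a small parity observation. Once these combinatorial points are fixed, the remaining checks reduce to routine block-matrix calculations.
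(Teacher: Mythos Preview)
Your argument is correct and follows a genuinely different route from the paper. The paper proceeds by induction on the number $p$ of conjugate pairs: it strips off one pair $(\lambda_1,\overline{\lambda_1})$ at a time by first diagonalizing $A$ in a suitable order, reading off the block shape of $U_0^H R_n U_0$ from Lemma~\ref{lem:scalarprod1}(a), applying a one-sided correction $U_1$ to force the off-diagonal block to $R_{m_1}$, and then invoking a congruence on the middle Hermitian block to recover $R_{n-2m_1}$ there; the inductive hypothesis finishes the remaining pairs inside the per-Hermitian core. You instead build $P$ in one shot from the global eigenspace decomposition, using nondegeneracy of $[\cdot,\cdot]_{R_n}$ to manufacture $R_{m_j}$-dual bases on each hyperbolic piece $V_{\lambda_j}\oplus V_{\overline{\lambda_j}}$ and a signature count on $W$ to match $R_{n-2m}$. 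Your approach is more conceptual and sidesteps the induction entirely, at the price of the block bookkeeping you flag (which you have in fact handled correctly: the reversed $f$-block ordering together with the $R_{m_j}$-duality does tile the upper-right $m\times m$ block as exactly $R_m$, and your parity observation for the signature of $W$ is correct). The paper's route is more algorithmic and replaces your explicit signature count by a single congruence step at each stage. One small remark: Theorem~\ref{thm:sylvester-inertia}(a) as stated produces the inertia form $\mathrm{diag}(+I,-I)$ rather than $R_{n-2m}$ directly, so what you are really invoking is the full Sylvester law (two nondegenerate Hermitian forms of equal signature are congruent); this is standard and clearly what you intend.
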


\begin{proof}
We prove the theorem by induction on the number $p$ of distinct eigenvalues with positive imaginary parts. To this end, recall \eqref{equ:spectrum} and notice that all eigenvalues of a per-Hermitian matrix beside the real axis appear in tuples $(\lambda, \overline{\lambda})$ with both having the same algebraic multiplicity.  If $A \in \MM_n(\CC)$ is per-Hermitian with only real eigenvalues (i.e. $p=0$) we are done (choose $P=I_n$).
So, assume the theorem holds for all per-Hermitian matrices with $p-1$ distinct eigenvalues with positive imaginary parts.

Let the distinct nonreal eigenvalues of $A \in \MM_n(\CC)$ with positive imaginary parts be given by $\lambda_1, \ldots , \lambda_p$. Moreover, let
$$U_0^{-1}AU_0 =   \big( \lambda_1 I_{m_1} \oplus \overline{\lambda}_1 I_{m_1} \big) \boxdot D_1 = \lambda_1 I_{m_1} \oplus D_1 \oplus \overline{\lambda}_1 I_{m_1} =: \tilde{D}$$ be a diagonalization of $A$ with $\mathfrak{m}(A,\lambda_1) = m_1$ being the multiplicity of $\lambda_1$ (implying $\mathfrak{m}(A, \overline{\lambda}_1) = m_1$) and some diagonal matrix $D_1 \in \MM_{n-2m_1}(\CC)$. According to Lemma \ref{lem:scalarprod1} (a) we know the form of the Hermitian matrix $U_0^HR_nU_0$ (because the columns of $U_0$ are eigenvectors of $A$). In fact, we have
$$ U_0^H R_n U_0 = \begin{bmatrix} 0 & S \\ S^H & 0 \end{bmatrix} \boxdot Q = \begin{bmatrix}  &  & S \\  & Q &  \\ S^H &  &  \end{bmatrix}$$
for some $S \in \MM_{m_1}(\CC)$ and some $Q = Q^H \in \MM_{n-2m_1}(\CC)$. Since $U_0$ and $R_n$ are nonsingular, so are $S$ and $Q$ (since $U_0^HR_nU_0$ is nonsingular, too).

Now we define the matrix  $U_{1} := R_{m_1} \oplus I_{n-2m_1} \oplus S^{-1} \in \Gl_n(\CC)$ and observe that
\begin{equation} U_1^H\left( U_0^H R_n U_0 \right) U_1 = \begin{bmatrix} & R_{m_1} \\ R_{m_1} & \end{bmatrix} \boxdot Q = R_{2m_1} \boxdot Q. \label{equ:transf1-proof1} \end{equation}
Due to the construction of $U_1$ we have $U_1^{-1} \tilde{D} U_1 = \tilde{D}$. As the matrix in \eqref{equ:transf1-proof1} is congruent to $R_n$, $Q$ must be congruent to $R_{n-2m_1}$. Therefore, there exists some $U_{2}' \in \Gl_{n-2m_1}(\CC)$ so that $(U_{2}')^HQU_{2}' = R_{n-2m_1}$. Defining $U_{2} := I_{m_1} \oplus U_{2}' \oplus I_{m_1}$ we obtain $U_{2}^H( R_{2m_1} \boxdot Q )U_2 = R_n$, so consequently the matrix $U_3 := U_0U_1U_2$ is perplectic. Now notice that

$$U_3^{-1}AU_3 = U_2^{-1} \tilde{D} U_2 = \begin{bmatrix} \lambda_1 I_{m_1} & \\ & \overline{\lambda}_1 I_{m_1} \end{bmatrix} \boxdot A'$$
where $A'  = (U_2')^{-1}D_1(U_2') \in \MM_{n-2m_1}(\CC)$ is now, in general, a full matrix. Since $U_3$ is perplectic, the matrix $U_3^{-1}AU_3$  is per-Hermitian according to Lemma \ref{lem:switch2}. Moreover, Remark \ref{rem:perp_properties} (c) reveals that $A'$ is a per-Hermitian matrix.
Now, the induction hypothesis applies to $A' \in \MM_{n-2m_1}(\CC)$ since $A'$ has only the $p-1$ nonreal eigenvalues $\lambda_2, \ldots , \lambda_p$. Thus, there exists some perplectic $P' \in \Gl_{n-2m_1}(\CC)$ such that $$\left( P' \right)^{-1}A'P' = \left( D_1' \oplus \left( D_1' \right)^\star \right) \boxdot \hat{A}$$
where $D_1' = \lambda_2 I_{m_2} \oplus \cdots \oplus \lambda_p I_{m_p}$, $(D_1')^\star = \overline{\lambda}_p I_{m_p} \oplus \cdots \oplus \overline{\lambda}_2 I_{m_2}$  and $\hat{A} \in \MM_{n-2m}(\CC)$ is per-Hermitian with only real eigenvalues. According to Lemma \ref{lem:perplectic} (a) the matrix $W := ( I_{m_1} \oplus I_{m_1}) \boxdot  P'  \in \Gl_n(\CC)$ is perplectic. From Lemma \ref{lem:perplectic} (c) we know that $P:= U_3W$ is perplectic and we obtain
$$P^{-1}AP = \begin{bmatrix} D & \\ & D^\star \end{bmatrix} \boxdot \hat{A} = \begin{bmatrix} D & & \\ & \hat{A} & \\ & & D^\star \end{bmatrix}$$
with $D := \lambda_1 I_{m_1} \oplus \cdots \oplus \lambda_p I_{m_p}$ and the matrix $\hat{A} \in \MM_{n-2m}(\CC)$ is per-Hermitian having only real eigenvalues.
\end{proof}

The next Theorem \ref{thm:sim-transf-1} states that the transformation carried out on $A$ in the proof of Theorem \ref{thm:single-transform} can - with some minor modifications - be extended to two commuting per-Hermitian matrices $A,B \in \MM_n(\CC)$. From this point of view, it is possible to simultaneously extract per-Hermitian matrices $\hat{A}, \hat{B}$ (of smaller size) with only real eigenvalues from commuting per-Hermitian $A,B \in \MM_n(\CC)$ via a perplectic similarity and the perplectic sum. We prove Theorem \ref{thm:sim-transf-1} as stated below although the result and its proof easily extend to any finite family of commuting per-Hermitian matrices.

\begin{theorem} \label{thm:sim-transf-1}
Let $A,B \in \MM_n(\CC)$ be per-Hermitian and diagonalizable and assume that $AB=BA$ holds. Then there exists some $s \in \mathbb{N}$, $0 \leq s \leq \lfloor n/2 \rfloor$ and some perplectic $P \in \Gl_n(\CC)$ such that
$$P^{-1}AP = \begin{bmatrix} D_A & \\ & D_A^\star \end{bmatrix} \boxdot \hat{A} \qquad P^{-1}BP = \begin{bmatrix} D_B & \\ & D_B^\star \end{bmatrix} \boxdot \hat{B}$$
where $D_A, D_B \in \MM_s(\CC)$ are diagonal and $\hat{A}, \hat{B} \in \MM_{n-2s}(\CC)$ are commuting per-Hermitian matrices with only real eigenvalues.
\end{theorem}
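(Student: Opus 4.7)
My plan is to mimic the inductive argument of Theorem~\ref{thm:single-transform} while carrying $A$ and $B$ through the transformation in parallel. The invariant for the induction is the number $r$ of distinct pairs $\{E_{\lambda,\mu},\,E_{\bar\lambda,\bar\mu}\}$ of \emph{joint} eigenspaces of $(A,B)$ for which at least one of $\lambda,\mu$ is nonreal; here $E_{\lambda,\mu} := \{x \in \CC^n : Ax = \lambda x,\; Bx = \mu x\}$. The base case $r=0$ forces $\sigma(A),\sigma(B) \subset \RR$, and then $s=0$ with $P=I_n$ works. The main obstacle will be to establish the indefinite-form orthogonality statement for joint eigenspaces, since it is precisely this fact that allows a \emph{single} perplectic similarity $P$ to reduce $A$ and $B$ simultaneously.

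\textbf{Joint orthogonality and setup.} For the inductive step, Lemma~\ref{lem:simult-diag} applied to the commuting, diagonalizable pair $A,B$ yields a common eigenbasis. Applying Lemma~\ref{lem:scalarprod1}(a) separately to $A$ and to $B$ shows: if $x \in E_{\lambda_A,\lambda_B}$ and $y \in E_{\mu_A,\mu_B}$ satisfy $x^H R_n y \neq 0$, then necessarily $\mu_A = \bar\lambda_A$ \emph{and} $\mu_B = \bar\lambda_B$. Hence distinct joint eigenspaces are $R_n$-orthogonal except across conjugate pairs, and nondegeneracy of $R_n$ forces such a conjugate pair to share a common dimension. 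Pick any mixed pair with representatives $(\lambda_1,\mu_1)$ and $(\bar\lambda_1,\bar\mu_1)$ of common dimension $m_1$, and choose $U_0 \in \Gl_n(\CC)$ consisting of common eigenvectors ordered as: a basis of $E_{\lambda_1,\mu_1}$, then bases of all remaining joint eigenspaces, then a basis of $E_{\bar\lambda_1,\bar\mu_1}$. This ordering produces
\[ U_0^H R_n U_0 = \begin{bmatrix} 0 & S \\ S^H & 0 \end{bmatrix} \boxdot Q \]
with $S \in \Gl_{m_1}(\CC)$ and a Hermitian $Q \in \Gl_{n-2m_1}(\CC)$ (nonsingularity of $S$ and $Q$ follows because $U_0^H R_n U_0$ is nonsingular). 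This is exactly the block shape exploited in Theorem~\ref{thm:single-transform}.

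\textbf{Completing the reduction.} Set $U_1 := R_{m_1} \oplus I_{n-2m_1} \oplus S^{-1}$ and $U_2 := I_{m_1} \oplus U_2' \oplus I_{m_1}$, where $U_2' \in \Gl_{n-2m_1}(\CC)$ is furnished by Theorem~\ref{thm:sylvester-inertia}(1) applied to $Q$ so that $(U_2')^H Q U_2' = R_{n-2m_1}$. Then $U_3 := U_0 U_1 U_2$ is perplectic, and since $U_1, U_2$ act trivially on the $\lambda_1 I_{m_1}$ and $\bar\lambda_1 I_{m_1}$ (respectively $\mu_1 I_{m_1}$ and $\bar\mu_1 I_{m_1}$) scalar blocks of the common diagonalization, one obtains simultaneously
\[ U_3^{-1}A U_3 = \bigl(\lambda_1 I_{m_1} \oplus \bar\lambda_1 I_{m_1}\bigr) \boxdot A', \qquad U_3^{-1}B U_3 = \bigl(\mu_1 I_{m_1} \oplus \bar\mu_1 I_{m_1}\bigr) \boxdot B' \]
for some $A',B' \in \MM_{n-2m_1}(\CC)$. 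Lemma~\ref{lem:switch2} with Remark~\ref{rem:perp_properties}(c) makes $A',B'$ per-Hermitian; Remark~\ref{rem:perp_properties}(b) combined with $AB = BA$ gives $A'B' = B'A'$; and Lemma~\ref{lem:block-diag} yields their diagonalizability. The mixed-pair count for $(A',B')$ is $r-1$, so the induction hypothesis supplies a perplectic $P' \in \Gl_{n-2m_1}(\CC)$ of the required form. Lifting via $W := (I_{m_1} \oplus I_{m_1}) \boxdot P'$ (perplectic by Lemma~\ref{lem:perplectic}(a)) and setting $P := U_3 W$ (perplectic by Lemma~\ref{lem:perplectic}(c)) completes the construction, with final blocks $D_A = \lambda_1 I_{m_1} \oplus D_A^{\mathrm{rest}}$ and $D_B = \mu_1 I_{m_1} \oplus D_B^{\mathrm{rest}}$.
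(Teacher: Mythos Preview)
Your proof is correct, but it follows a genuinely different route from the paper's. The paper proceeds \emph{sequentially}: it first applies Theorem~\ref{thm:single-transform} as a black box to $A$ alone, obtaining a perplectic $P_1$ with $P_1^{-1}AP_1 = (D_{11}\oplus D_{11}^\star)\boxdot A'$; it then reads off the forced block structure of $P_1^{-1}BP_1$ from commutativity, diagonalizes the off--real blocks of $B$ by a further perplectic similarity of the shape $(\tilde Q\oplus\tilde Q^{-\star})\boxdot I$ (which leaves $P_1^{-1}AP_1$ untouched), and finally repeats the whole procedure with the roles of $A$ and $B$ interchanged on the inner pieces $A',B'$. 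Your argument instead treats $A$ and $B$ \emph{symmetrically} from the outset: you pass to a common eigenbasis, observe via Lemma~\ref{lem:scalarprod1}(a) applied twice that the $R_n$-pairing between joint eigenspaces can be nonzero only across conjugate joint eigenvalue pairs, and then peel off one such mixed pair at a time by re-running the $U_0U_1U_2$ construction from the proof of Theorem~\ref{thm:single-transform}. What your approach buys is a cleaner induction that generalizes transparently to any finite commuting family of per-Hermitian diagonalizable matrices (as the paper notes in passing); what the paper's approach buys is modularity, since it invokes Theorem~\ref{thm:single-transform} wholesale rather than reopening its proof. One small wording point: your appeal to Theorem~\ref{thm:sylvester-inertia}(1) for $(U_2')^HQU_2'=R_{n-2m_1}$ needs the extra observation that $Q$ has the same inertia as $R_{n-2m_1}$, which follows from Sylvester's law since $U_0^HR_nU_0$ is congruent to $R_n$; the paper makes this step explicit.
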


\begin{proof}
Assume that $\lambda_1, \ldots , \lambda_p \in \sigma(A)$ are the nonreal eigenvalues of $A$ with positive imaginary parts. Let $\mathfrak{m}(A, \lambda_j) = m_j \geq 1$ be the multiplicity of $\lambda_j$ and set $m:= m_1 + \cdots + m_p$. According to Theorem \ref{thm:single-transform}, there is some perplectic $P_1 \in \Gl_n(\CC)$ such that
$$P_1^{-1}AP_1 = \begin{bmatrix} D_{11} & \\ & D_{11}^\star \end{bmatrix} \boxdot A'$$
where $D_{11} = \lambda_1 I_{m_1} \oplus \cdots \oplus \lambda_p I_{m_p} \in \MM_m(\CC)$ and $A' \in \MM_{n-2m}(\CC)$ is per-Hermitian with only real eigenvalues. As $A$ and $B$ commute so do $P_1^{-1}AP_1$ and $P_1^{-1}BP_1$. Moreover, $P_1^{-1}AP$ and $P_1^{-1}BP_1$ are still per-Hermitian since $P_1$ is perplectic, cf. Lemma \ref{lem:switch2}. The commutativity implies that $P_1^{-1}BP_1$ has an analogous structure compared to $P_1^{-1}AP_1$, i.e. the form of $P_1^{-1}BP_1$ is determined as
\begin{equation} P_1^{-1}BP_1 = \begin{bmatrix} \tilde{B} & \\ & \tilde{B}^\star \end{bmatrix} \boxdot B' \label{equ:proof-transf-1} \end{equation}
where $\tilde{B} = \tilde{B}_1 \oplus \cdots \oplus \tilde{B}_p \in \MM_m(\CC)$ is block-diagonal with $\tilde{B}_j \in \MM_{m_j}(\CC)$ for $j=1, \ldots , p$, and $B' \in \MM_{n-2m}(\CC)$ is a per-Hermitian matrix commuting with $A'$. In view of Remark \ref{rem:perp_properties} (d) and Lemma \ref{lem:block-diag} we know that $B'$ and all matrices $\tilde{B}_j$, $j=1, \ldots , p$, are diagonalizable (since $B$ was diagonalizable and $P_1^{-1}BP_1$ is block-diagonal).

Now  let $\tilde{Q}_j^{-1} \tilde{B}_j \tilde{Q}_j = \tilde{D}_j$ be a diagonalization of $\tilde{B}_j$, $j=1, \ldots , p$. We define
$$\tilde{Q} := \tilde{Q}_1 \oplus \cdots \oplus \tilde{Q}_p \quad \textnormal{and} \quad Q := \left( \tilde{Q} \oplus \tilde{Q}^{-\star} \right) \boxdot I_{n-2m} \in \Gl_n(\CC).$$
 According to Lemma \ref{lem:perplectic} (a) and (c), $Q$ and $P_2 := P_1Q$ are perplectic. In addition, we obtain $P_2^{-1}AP_2 = P_1^{-1}AP_1$ (so $P_1^{-1}AP_1$ does not change under the similarity transformation with $Q$) and $P_2^{-1}BP_2 = ( D_{21} \oplus D_{21}^\star ) \boxdot B'$ where $D_{21} := \tilde{D}_1 \oplus \cdots \oplus \tilde{D}_p$ and $B'$ is per-Hermitian according to Remark \ref{rem:perp_properties} (c). Now we consider $A', B' \in \MM_{n-2m}(\CC)$ and set $m' := n-2m$. Recall that $A'$ has only real eigenvalues (by construction) which need not be the case for $B'$. Thus we may apply the same procedure to those matrices to get rid of the eigenvalues of $B'$ beside the real axis. So assume $\mu_1, \ldots , \mu_q$ are the distinct eigenvalues of $B'$ with positive imaginary parts having multiplicities $\mathfrak{m}(B', \mu_j) = r_j$ and set $r := r_1 + \cdots + r_q$ (note that $r \leq \lfloor m'/2 \rfloor$). According to Theorem \ref{thm:single-transform} there exists some perplectic matrix $P_1' \in \Gl_{m'}(\CC)$ such that
$$ \left( P_1' \right)^{-1}B'P_1' = \big( D_{22} \oplus D_{22}^\star \big) \boxdot \hat{B}$$
where $D_{22} = \mu_1 I_{r_1} \oplus \cdots \oplus \mu_q I_{r_q} \in \MM_r(\CC)$ and $\hat{B} \in \MM_{m'-2r}(\CC)$ is per-Hermitian with only real eigenvalues.
Since $A'$ and $B'$ commute and are both per-Hermitian, we conclude as in \eqref{equ:proof-transf-1} that
$$ \left( P_1' \right)^{-1}A'P_1' = \left( \breve{A} \oplus \breve{A}^\star \right) \boxdot \hat{A}$$ holds for some block-diagonal matrix $\breve{A} = \breve{A}_1 \oplus \cdots \oplus \breve{A}_q \in \MM_r(\CC)$ with $\breve{A}_j \in \MM_{r_j}(\CC)$ and some per-Hermitian $\hat{A} \in \MM_{m'-2r}(\CC)$. Using once more Lemma \ref{lem:block-diag} assume that $\breve{W}_j^{-1} \breve{A}_j \breve{W}_j = \breve{D}_j$ is a diagonalization of $\breve{A}_j$ for $j=1, \ldots , q$. We define
$$ \breve{W} := \breve{W}_1 \oplus \cdots \oplus \breve{W}_q \quad \textnormal{and} \quad W := \left( \breve{W} \oplus \breve{W}^{-\star} \right) \boxdot I_{m'-2r} \in \Gl_{m'}(\CC).$$
Then, according to Lemma \ref{lem:perplectic} (a) and (c), $W$ and $P_2' := P_1'W \in \Gl_{m'}(\CC)$ are perplectic. Moreover, we obtain $(P_2')^{-1}B'P_2' = (P_1')^{-1}B'P_1'$ (so the similarity transformation of $(P_1')^{-1}B'P_1'$ with $W$ does not change this matrix) and $(P_2')^{-1}A'P_2' = (D_{12} \oplus D_{12}^\star) \boxdot \hat{A}$ where we have set $D_{12} := \breve{D}_1 \oplus \cdots \oplus \breve{D}_q$. According to Lemma \ref{lem:perplectic} the matrices $P_3 :=  I_{2m}  \boxdot P_2' \in \Gl_n(\CC)$ and $P := P_1P_2P_3 \in \Gl_n(\CC)$ are both perplectic. So, finally we obtain
$$P^{-1}AP = \begin{bmatrix} D_A & \\ & D_A^\star \end{bmatrix} \boxdot \hat{A} \qquad P^{-1}BP = \begin{bmatrix} D_B & \\ & D_B^\star \end{bmatrix} \boxdot \hat{B}$$
with diagonal matrices $D_A = D_{11} \oplus D_{12} \in \MM_{s}(\CC)$ and $D_B = D_{21} \oplus D_{22} \in \MM_{s}(\CC)$ ($s=m+r$) and two commuting per-Hermitian matrices $A',B' \in \MM_{n-2(m+r)}(\CC)$ with only real eigenvalues.
\end{proof}

We refer to an even-sized matrix $A \in \MM_n(\CC)$ ($n=2m$) as being in $X$-form, if
\begin{equation} A= \begin{bmatrix} D_{11} & R_mD_{12} \\ R_mD_{21} & D_{22} \end{bmatrix} =  \left[ \tikz[baseline=+4mm]{ \draw[color=black] (0,1) -- (1,0); \draw[color=black] (0,0) -- (1,1); } \right] \label{equ:x-form} \end{equation}
where $D_{11}, D_{12}, D_{21}, D_{22} \in \MM_m(\CC)$ are diagonal. If $n$ is odd, we say that $A \in \MM_n(\CC)$ is in $X$-form if $A = A' \boxdot [ \alpha ]$ for some matrix $A \in \MM_{n-1}(\CC)$ in $X$-form as in \eqref{equ:x-form} and some scalar $\alpha \in \CC$.

We call a real matrix $A \in \MM_n(\RR)$ persymmetric, if $R_nA^TR_n = A$ holds. A persymmetric matrix $A$ with is additionally symmetric (i.e. $A=A^T$) is called bisymmetric.
Lemma \ref{lem:Z-transform} below shows how a real diagonal matrix can be unitarily transformed to a matrix in bisymmetric $X$-form. This result will be used in the proof of Theorem \ref{thm:simult-X-form}.

\begin{lemma} \label{lem:Z-transform}
Let $D \in \MM_{n}(\RR)$ be diagonal. If $n=2m$ is even we define the unitary (i.e. orthogonal) matrix
\begin{equation} Z := \frac{1}{\sqrt{2}} \begin{bmatrix} I_{m} & R_{m} \\ - R_{m} & I_{m} \end{bmatrix} \in \Gl_n(\RR). \label{equ:Z-matrix} \end{equation}
Then $Z^HDZ = Z^{-1}DZ$ is a real bisymmetric matrix in $X$-form. The same statement holds for the matrix $Z \boxdot [1] \in \Gl_n(\CC)$ if $n=2m+1$ is odd.
\end{lemma}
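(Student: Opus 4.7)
\noindent\textbf{Plan for the proof of Lemma \ref{lem:Z-transform}.} I will compute $Z^HDZ$ explicitly in block form (noting $Z^H=Z^T$ since $Z$ is real, and $Z^{-1}=Z^T$ since a direct check gives $Z^TZ = I_n$). Writing $D = D_1 \oplus D_2$ with $D_1,D_2 \in \MM_m(\RR)$ diagonal, a routine $2\times 2$ block multiplication yields
$$ Z^TDZ = \tfrac{1}{2}\begin{bmatrix} D_1 + R_mD_2R_m & D_1R_m - R_mD_2 \\ R_mD_1 - D_2R_m & R_mD_1R_m + D_2 \end{bmatrix}.$$
The key observation is that for any diagonal $D_j$, the matrix $R_mD_jR_m$ is again diagonal (it just reverses the diagonal entries), while $D_jR_m$ and $R_mD_j$ are anti-diagonal. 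Hence the $(1,1)$ and $(2,2)$ blocks are diagonal. For the off-diagonal blocks, one rewrites $D_1R_m - R_mD_2 = (D_1 - R_mD_2R_m)R_m$ and $R_mD_1 - D_2R_m = R_m(D_1 - R_mD_2R_m)$, each the product of a diagonal matrix with $R_m$, hence anti-diagonal. This directly matches the pattern \eqref{equ:x-form}, so $Z^TDZ$ is in $X$-form.

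Bisymmetry then splits into two routine checks. Symmetry is immediate: $(Z^TDZ)^T = Z^TD^TZ = Z^TDZ$ since $D$ is diagonal. For persymmetry I need $R_{2m}(Z^TDZ)R_{2m} = Z^TDZ$ (using symmetry to replace $A^T$ by $A$). Conjugation by $R_{2m} = \left[\begin{smallmatrix} 0 & R_m \\ R_m & 0 \end{smallmatrix}\right]$ swaps the $(1,1)$ and $(2,2)$ blocks while sandwiching them between $R_m$'s, and does the same to the $(1,2),(2,1)$ blocks. Since $R_m(R_mD_1R_m + D_2)R_m = D_1 + R_mD_2R_m$ and $R_m(R_mD_1 - D_2R_m)R_m = D_1R_m - R_mD_2$ (using $R_m^2 = I_m$), each block returns to itself, proving persymmetry.

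For the odd case $n = 2m+1$, write any diagonal $D \in \MM_n(\RR)$ as $D = D' \boxdot [\alpha]$ with $D' \in \MM_{2m}(\RR)$ diagonal and $\alpha \in \RR$. Remark \ref{rem:perp_properties}(a)--(b) then gives
$$ (Z \boxdot [1])^T \bigl(D' \boxdot [\alpha]\bigr)(Z \boxdot [1]) = (Z^TD'Z) \boxdot [\alpha],$$
so the result reduces to the even case just proved, with the extra scalar $\alpha$ slotting into the middle of the $X$-form as a real entry, preserving bisymmetry. The only step that requires any real thought is the persymmetry verification, but once one notes the $R_m$-reversal symmetry between the two diagonal blocks of $Z^TDZ$ (and similarly between the two anti-diagonal blocks), it becomes a one-line check.
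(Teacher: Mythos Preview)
Your proof is correct and follows essentially the same approach as the paper: both compute the $2\times 2$ block expression for $Z^TDZ$ explicitly and then read off the $X$-form and bisymmetry from the block entries, with the odd case handled by reduction via the perplectic sum. Your persymmetry check via explicit conjugation by $R_{2m}$ and your symmetry argument $(Z^TDZ)^T = Z^TDZ$ are slightly more streamlined than the paper's block-by-block verification, but there is no substantive difference in method.
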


\begin{proof}
Assume $n=2m$ and write $D = D_1 \oplus D_2$ with two real, diagonal matrices $D_1,D_2 \in \MM_m(\CC)$. Then, a direct calculation shows that
\begin{equation} Z^HDZ = \frac{1}{2} \begin{bmatrix} D_1+R_mD_2R_m & D_1R_m-R_mD_2 \\ R_mD_1 - D_2R_m & R_mD_1R_m+D_2 \end{bmatrix} \in \MM_n(\RR). \label{equ:ZtoX} \end{equation}
Since $(D_1R_m-R_mD_2)^T = R_mD_1 - D_2R_m$, and both $D_1+R_mD_2R_m$ and $R_mD_1R_m+D_2$ are real and diagonal, it follows that $Z^HDZ$ is symmetric. Moreover, as $(D_1+R_mD_2R_m)^\star = R_mD_1R_m+D_2$ is diagonal and both $D_1R_m-R_mD_2$, $ R_mD_1 - D_2R_m$ are real and do only have nonzero entries along their anti-diagonals, $Z^HDZ$ is persymmetric. Therefore, $Z^HDZ$ is real and bisymmetric. The statement follows analogously for $Z \boxdot [1]$ in case $n=2m+1$ is odd.
\end{proof}

For $n=2m$ it follows directly from Lemma \ref{lem:Z-transform} and \eqref{equ:ZtoX} (choosing $D_1 = +I_m$ and $D_2 = -I_m$) that
\begin{equation} Z^H \begin{bmatrix} +I_m & \\ & -I_m \end{bmatrix} Z = \begin{bmatrix} & R_m \\ R_m & \end{bmatrix} = R_{n} \label{equ:ZtoR} \end{equation}
for the matrix $Z \in \Gl_{n}(\RR)$ defined in \eqref{equ:Z-matrix}. Considering $Z \boxdot [1]$ in the case where $n$ is odd yields the similar result. With Lemma \ref{lem:Z-transform} we may now prove the simultaneous transformation of two per-Hermitian matrices with only real eigenvalues to $X$-form as stated in Theorem \ref{thm:simult-X-form}. Our main theorem on the perplectic transformation of $R_n$-normal matrices to $X$-form  (Theorem \ref{thm:normal2X}) will then easily follow from Theorem \ref{thm:sim-transf-1} and Theorem \ref{thm:simult-X-form}.

\begin{theorem} \label{thm:simult-X-form}
Let $A,B \in \MM_n(\CC)$ be per-Hermitian and diagonalizable and assume that $AB=BA$ holds. Moreover, suppose that $A,B$ have exclusively real eigenvalues. Then, there is a perplectic matrix $P \in \Gl_n(\CC)$ such that
$$P^{-1}AP =: X_A =  \left[ \tikz[baseline=+4mm]{ \draw[color=black] (0,1) -- (1,0); \draw[color=black] (0,0) -- (1,1); } \right], \quad P^{-1}BP =: X_B =  \left[ \tikz[baseline=+4mm]{ \draw[color=black] (0,1) -- (1,0); \draw[color=black] (0,0) -- (1,1); } \right]$$
where $X_A,X_B \in \MM_n(\RR)$ are real bisymmetric matrices in $X$-form.
\end{theorem}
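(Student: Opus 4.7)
The plan is to simultaneously diagonalize $A$ and $B$, normalize the resulting Hermitian Gram matrix $S^HR_nS$ to $\pm 1$ form, and then apply the orthogonal matrix $Z$ of Lemma \ref{lem:Z-transform} to send both real diagonal matrices to real bisymmetric $X$-form while turning the Gram matrix into $R_n$. Since $A$ and $B$ commute and are diagonalizable, Lemma \ref{lem:simult-diag} provides some $S \in \Gl_n(\CC)$ with $S^{-1}AS = D_A$ and $S^{-1}BS = D_B$ simultaneously diagonal; since the spectra of $A$ and $B$ are real, $D_A$ and $D_B$ are real diagonal matrices.

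First I would group the columns of $S$ by the joint eigenspaces of $(A,B)$. Lemma \ref{lem:scalarprod1} implies that eigenvectors of $A$ for distinct real eigenvalues are $R_n$-orthogonal, and similarly for $B$, so the Gram matrix $G := S^HR_nS$ is block-diagonal with invertible Hermitian blocks $G_1,\ldots,G_k$ indexed by the joint eigenspaces. Within each such block, $D_A$ and $D_B$ act as scalar multiples of the identity, so any invertible change of basis inside that block leaves both $D_A$ and $D_B$ unchanged. Applying Sylvester's theorem (Theorem \ref{thm:sylvester-inertia}) to each $G_j$ separately yields some $S' \in \Gl_n(\CC)$ with $(S')^{-1}AS' = D_A$, $(S')^{-1}BS' = D_B$ still real diagonal, and $(S')^HR_nS' = \Sigma$ diagonal with entries in $\{\pm 1\}$. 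Sylvester's law of inertia forces $\Sigma$ to have $\lceil n/2\rceil$ positive and $\lfloor n/2\rfloor$ negative entries, so a permutation matrix $\Pi$ allows me to reorder the basis so that $(S'\Pi)^HR_n(S'\Pi) = I_{\lceil n/2\rceil} \oplus (-I_{\lfloor n/2\rfloor})$ while $(S'\Pi)^{-1}A(S'\Pi) = \Pi^TD_A\Pi$ and $(S'\Pi)^{-1}B(S'\Pi) = \Pi^TD_B\Pi$ remain real diagonal.

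Finally, setting $P := S'\Pi Z$ for $n = 2m$ and $P := S'\Pi (Z \boxdot [1])$ for $n = 2m+1$, with $Z$ as in \eqref{equ:Z-matrix}, the identity \eqref{equ:ZtoR} and its odd analog from the remark following Lemma \ref{lem:Z-transform} yield $P^HR_nP = R_n$, so $P$ is perplectic. Lemma \ref{lem:Z-transform} then gives that $P^{-1}AP = Z^H(\Pi^TD_A\Pi)Z$ and $P^{-1}BP = Z^H(\Pi^TD_B\Pi)Z$ (with the obvious modification by $\boxdot[1]$ in the odd case) are real bisymmetric matrices in $X$-form. The main subtlety is bookkeeping: one must verify that each of the three intermediate steps---the Sylvester normalization block by block, the permutation, and the conjugation by $Z$---preserves the real diagonal form of both $D_A$ and $D_B$. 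This works precisely because Sylvester is applied on blocks where $D_A$ and $D_B$ are already scalar, the permutation merely permutes diagonal entries, and $Z$ was tailored in Lemma \ref{lem:Z-transform} to send any real diagonal matrix to a real bisymmetric $X$-form while simultaneously intertwining $I_m \oplus (-I_m)$ with $R_n$.
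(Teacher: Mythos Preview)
Your proposal is correct and follows essentially the same route as the paper's proof: simultaneous diagonalization via Lemma~\ref{lem:simult-diag}, a block-diagonal Gram matrix $S^HR_nS$ via Lemma~\ref{lem:scalarprod1} on the joint eigenspaces, blockwise Sylvester normalization via Theorem~\ref{thm:sylvester-inertia} (which leaves $D_A,D_B$ untouched because they are scalar on each block), a permutation, and finally the $Z$-transform of Lemma~\ref{lem:Z-transform}. The only cosmetic slip is that in the odd case the permutation should target $(I_m \oplus -I_m)\boxdot[1]$ rather than $I_{m+1}\oplus(-I_m)$, so that $Z\boxdot[1]$ and the identity following \eqref{equ:ZtoR} apply as stated.
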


\begin{proof}
Assume $\lambda_1, \ldots , \lambda_p \in \RR$ are the distinct eigenvalues of $A$ with multiplicities $\mathfrak{m}(A, \lambda_j)=m_j \geq 1$ for $j=1, \ldots ,p$, and $\mu_1, \ldots , \mu_q \in \RR$ are the distinct eigenvalues of $B$. Assume further that $T \in \Gl_n(\CC)$ simultaneously diagonalizes $A$ and $B$, i.e. $A_1 := T^{-1}AT$ and $B_1 := T^{-1}BT$ are both diagonal (cf. Lemma \ref{lem:simult-diag}). In particular, suppose  $A_1 = \lambda_1 I_{m_1} \oplus \cdots \oplus \lambda_p I_{m_p}$ and $B_1 = D_1 \oplus \cdots \oplus D_p$ where the matrices  $D_j \in \MM_{m_j}(\RR)$, $j=1, \ldots , p$, are diagonal. Notice that each eigenvalue $\mu_j$ of $B$ appears $\mathfrak{m}(B, \mu_j)$ times on the diagonal of $B_1$ (in some of the matrices $D_1, \ldots , D_p$). Without loss of generalization we can assume that eigenvalues of $D_j$ with higher multiplicities are grouped together on the diagonal of $D_j$.
So assume that $r_j \geq 1$ distinct eigenvalues of $B$ appear in $D_j$ for each $j=1, \ldots , p$. We denote these by $\mu_{j,1}, \ldots , \mu_{j,{r_j}}$. Do not overlook that each $\mu_{j,k}$ is equal to some eigenvalue among $\mu_1, \ldots , \mu_q$. Let their multiplicities in $D_j$ be $\mathfrak{m}(D_j, \mu_{j,k}) = m_{j,k}$ for $k=1, \ldots , r_j$. Thus, w.\,l.\,o.\,g. we assume that $D_j$ can be expressed as\footnote{Otherwise, we may reorder the columns of $T$ to produce this form.}
$$D_j = \mu_{j,1} I_{m_{j,1}} \oplus \cdots \oplus \mu_{j,r_j} I_{m_{j,r_j}}, \qquad j=1, \ldots , p,$$
with $m_{j,1} + \cdots + m_{j,r_j} = m_j$.

Recall that the columns of $T$ are eigenvectors of both $A$ and $B$. Thus, we may determine the form of the Hermitian matrix $T^HR_nT$ according to Lemma \ref{lem:scalarprod1} (a).
In fact, we have $T^HR_nT = S_1 \oplus \cdots \oplus S_p$ for Hermitian matrices $S_j \in \MM_{m_j}(\CC)$ following from the structure of $A_1$. As $T^HR_nT$ is nonsingular, so is each $S_j$, $j=1, \ldots, p$. In addition, the structure of $B_1$ (in particular, the form of $D_1, \ldots, D_p$) implies that each $S_j$ has to be block-diagonal, i.e.
$$ S_j = \begin{bmatrix} S_{j,1} & & \\ & \ddots & \\ & & S_{j,r_j} \end{bmatrix}, \qquad j=1, \ldots , p,$$
where $S_{j,k} \in \MM_{m_{j,k}}(\CC)$, $j=1, \ldots , p$ and $k=1, \ldots , r_j$, are nonsingular and Hermitian.  It follows from Theorem \ref{thm:sylvester-inertia} that there exist matrices $Q_{j,k} \in \Gl_{m_{j,k}}(\CC)$ so that $Q_{j,k}^H S_{j,k} Q_{j,k} = \textnormal{diag}(+1, \ldots , -1, \ldots)$ is the inertia matrix corresponding to $S_{j,k}$. Thus we define $Q := Q_1 \oplus \cdots \oplus Q_p \in \Gl_n(\CC)$ with $Q_j := Q_{j,1} \oplus \cdots \oplus Q_{j,r_j} \in \Gl_{m_j}(\CC)$.
Now we consider the transformations $Q^{-1}A_1Q$, $Q^{-1}B_1Q$ and $Q^H(T^HR_nT)Q$ and make the following important observations:

\begin{enumerate}[(a)]
\item It follows from the form of $A_1$ and $B_1$ and the form of $Q$ that the transformations  $Q^{-1}A_1Q$ and $Q^{-1}B_1Q$ do not change $A_1$ and $B_1$, respectively. Therefore we have $Q^{-1}A_1Q = A_1$ and $Q^{-1}B_1Q = B_1$.
\item The matrix $Q^H(T^HR_nT)Q$ is diagonal with only $+1$ and $-1$ entries along its diagonal. As it is congruent to $R_n$, there are exactly $\lceil n/2 \rceil$ entries equal to $+1$ and $\lfloor n/2 \rfloor$ entries equal to $-1$ appearing.
\end{enumerate}
Now we consider the cases of even and odd $n$ separately. First assume that $n$ is even.
Then there is a permutation matrix $W \in \Gl_n(\RR)$ such that
\begin{equation} W^H \big( Q^H T^HR_n TQ \big) W = \begin{bmatrix} +I_{n/2} & \\ & -I_{n/2} \end{bmatrix}. \label{equ:plus-minus-matrix} \end{equation}
Note that $W^H = W^T = W^{-1}$. As $W$ is a permutation, $A_2 := W^{T}A_1W$ and $B_2 := W^{T}B_1W$ remain to be real and diagonal matrices. Now Lemma \ref{lem:Z-transform} applies to $A_2,B_2$.
Recall that, for $Z \in \Gl_n(\RR)$ as in \eqref{equ:Z-matrix}, we have shown in \eqref{equ:ZtoR} that $Z^H (+I_{n/2} \oplus -I_{n/2})Z = R_n$ holds. Therefore, according to \eqref{equ:plus-minus-matrix}, $P:=TQWZ$ is perplectic. Moreover, by Lemma \ref{lem:Z-transform}, $Z^{-1}A_2Z =:X_A$ and $Z^{-1}B_2Z =:X_B$ are bisymmetric and in $X$-form since $A_2,B_2 \in \MM_n(\RR)$ are real and diagonal. In consequence, $P^{-1}AP = X_A$ and $P^{-1}BP =X_B$ are perplectic transformations of $A$ and $B$ to bisymmetric $X$-form and the statement is proven for even $n$.
In case $n$ is odd, the permutation $W$ can be chosen such that the matrix in \eqref{equ:plus-minus-matrix} has the form $(+I_{\lfloor n/2 \rfloor} \oplus -I_{ \lfloor n/2 \rfloor}) \boxdot [1]$. The matrix $Z$ from \eqref{equ:Z-matrix} can be replaced by $Z \boxdot [1]$ (see the discussion subsequent to \eqref{equ:ZtoR}) and the statement follows analogously.
\end{proof}

We are now ready to prove the main theorem of this section. To this end, we introduce for any $A \in \MM_n(\CC)$ the matrices
\begin{equation}
S_A := \frac{1}{2} \left( A + A^\star \right) \quad \textnormal{and} \quad K_A := \frac{i}{2} \left( A - A^\star \right).
\label{equ:splitting}
\end{equation}
As $(A^\star)^\star = A$ and $(A+A')^\star = A^\star + (A')^\star$ always holds for any $A \in \MM_n(\CC)$, it is easily seen that $S_A$ and $K_A$ are both per-Hermitian and that $A = S_A - iK_A$ holds.
The crucial observation used for the proof of Theorem \ref{thm:normal2X} is that $S_A$ and $K_A$ commute whenever $A$ is $R_n$-normal (i.e. $AA^\star = A^\star A$).

\begin{theorem} \label{thm:normal2X}
Let $A \in \MM_n(\CC)$ be diagonalizable and $R_n$-normal. Then there exists a perplectic matrix $P \in \Gl_n(\CC)$ such that
\begin{equation} P^\star A P = X_A =  \left[ \tikz[baseline=+4mm]{ \draw[color=black] (0,1) -- (1,0); \draw[color=black] (0,0) -- (1,1); } \right] \label{equ:XformThm} \end{equation}
is a matrix in $X$-form.
\end{theorem}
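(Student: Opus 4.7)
The plan is to exploit the Cartesian-type splitting $A = S_A - iK_A$ introduced in \eqref{equ:splitting}. As noted in the paragraph preceding the theorem, $S_A$ and $K_A$ are both per-Hermitian, and a direct calculation
\[ S_A K_A - K_A S_A = \tfrac{i}{2}\big(A^\star A - AA^\star\big) = 0 \]
shows that they commute as soon as $A$ is $R_n$-normal. So the task of putting a diagonalizable $R_n$-normal $A$ into $X$-form reduces to simultaneously putting two commuting per-Hermitian matrices into $X$-form, which is precisely what the auxiliary machinery (Theorems \ref{thm:sim-transf-1} and \ref{thm:simult-X-form}) is designed to do.

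Before invoking those results I need to check that $S_A$ and $K_A$ are diagonalizable. Since $A$ is diagonalizable, $A^\star = R_n^{-1}A^HR_n$ is similar to $A^T$ and thus also diagonalizable; as $A$ and $A^\star$ commute, Lemma \ref{lem:simult-diag} yields a common diagonalizer, so every linear combination of $A$ and $A^\star$ is diagonalizable. Applying Theorem \ref{thm:sim-transf-1} to the pair $(S_A,K_A)$ then produces a perplectic $P_1 \in \Gl_n(\CC)$ and an integer $s$ with
\[ P_1^{-1}S_AP_1 = (D_S \oplus D_S^\star) \boxdot \hat S, \qquad P_1^{-1}K_AP_1 = (D_K \oplus D_K^\star) \boxdot \hat K, \]
where $D_S, D_K \in \MM_s(\CC)$ are diagonal and $\hat S, \hat K \in \MM_{n-2s}(\CC)$ are commuting per-Hermitian matrices with only real eigenvalues. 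Theorem \ref{thm:simult-X-form} now supplies a perplectic $P_2' \in \Gl_{n-2s}(\CC)$ bringing $(\hat S, \hat K)$ simultaneously to bisymmetric $X$-form, and Lemma \ref{lem:perplectic}(a),(c) ensures that $P := P_1\bigl((I_s \oplus I_s) \boxdot P_2'\bigr)$ is perplectic. Using \eqref{equ:perpsum_1} and the fact that $P^\star = P^{-1}$ for perplectic $P$, reassembling yields
\[ P^\star A P = \big((D_S - iD_K) \oplus (D_S - iD_K)^\star\big) \boxdot X_{\hat A} \]
with $X_{\hat A}$ in $X$-form.

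It remains to verify that the right-hand side is itself in $X$-form. This is a sparsity bookkeeping step: the outer factor $(D_S - iD_K) \oplus (D_S - iD_K)^\star$ is diagonal, so it contributes only to the main diagonal of the full $n\times n$ matrix; the off-diagonal blocks of this outer factor are zero, so the top-right and bottom-left corners of the full matrix vanish; and the inner $X$-form block contributes entries only on the main and anti-diagonal, which remain on the main and anti-diagonal of the full matrix by the symmetry of the perplectic sum about both diagonals. I expect this last verification, together with aligning the block sizes coming out of Theorem \ref{thm:sim-transf-1}, to be the only delicate bookkeeping — the substantive work has already been done in the previous two theorems.
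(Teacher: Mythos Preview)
Your proposal is correct and follows essentially the same route as the paper: split $A=S_A-iK_A$, argue that $S_A,K_A$ are commuting diagonalizable per-Hermitian matrices, apply Theorem~\ref{thm:sim-transf-1} followed by Theorem~\ref{thm:simult-X-form}, and combine the two perplectic factors via Lemma~\ref{lem:perplectic}. If anything you are slightly more explicit than the paper in computing the commutator and in spelling out the final sparsity check; the only small omission is that invoking Theorem~\ref{thm:simult-X-form} tacitly uses that $\hat S,\hat K$ are diagonalizable, which follows from Remark~\ref{rem:perp_properties}(d) and Lemma~\ref{lem:block-diag}.
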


\begin{proof}
Let $A \in \MM_n(\CC)$ be $R_n$-normal and diagonalizable. We express $A$ as $S - iK$ for $S := S_A,K := K_A \in \MM_n(\CC)$ as defined in \eqref{equ:splitting}. Notice that, since $A$ is diagonalizable, so is $A^\star$. Moreover, as $A$ and $A^\star$ commute, they are simultaneously diagonalizable, cf. Lemma \ref{lem:simult-diag}. Consequently, both $S$ and $K$ are diagonalizable. Now we apply Theorem \ref{thm:sim-transf-1} to $S$ and $K$. Thus there exists some $s \in \mathbb{N}$, $0 \leq s \leq \lfloor n/2 \rfloor$ and a perplectic matrix $P_1 \in \Gl_n(\CC)$ such that $S_1 := P_1^{-1}SP_1$ and $K_1 := P_1^{-1}KP_1$ have the form
$$S_1 = \begin{bmatrix} D_S & \\ & D_S^\star \end{bmatrix} \boxdot S_1' \quad \textnormal{and} \quad K_1 = \begin{bmatrix} D_K & \\ & D_K^\star \end{bmatrix} \boxplus K_1'$$
where $D_S,D_K, D_S^\star, D_K^\star \in \MM_{s}(\CC)$ are diagonal and $S_1', K_1' \in \MM_{n-2s}(\CC)$ are both per-Hermitian with only real eigenvalues. As $S$ and $K$ commute, we have $S_1K_1 = K_1S_1$ and therefore $S_1' K_1' = K_1'S_1'$.

According to Theorem \ref{thm:simult-X-form} there is some perplectic $P_2' \in \Gl_{n-2s}(\CC)$ such that $(P_2')^{-1}S_1'P_2' =: X_S$ and $(P_2')^{-1}K_1'P_2' =: X_K$ are matrices in $X$-form. Since the matrix $P_2 := I_{2s} \boxdot P_2' \in \Gl_n(\CC)$  is perplectic by Lemma \ref{lem:perplectic} (a), $P:= P_1P_2$ is perplectic according to Lemma \ref{lem:perplectic} (c). Now the matrices
\begin{equation} P^{-1}SP = \begin{bmatrix} D_S & & \\ & X_S & \\ & & D_S^\star \end{bmatrix} \quad \textnormal{and} \quad  P^{-1}KP = \begin{bmatrix} D_K & & \\ & X_K & \\ & & D_K^\star \end{bmatrix} \label{equ:normal2X} \end{equation}
are both in $X$-form. Moreover, $P^{-1}AP = P^{-1}SP - i \big( P^{-1}KP \big) =: X_A$ is a matrix in $X$-form and the statement is proven.
\end{proof}

\begin{remark}
Notice that, in the proof of Theorem \ref{thm:normal2X}, the matrix $P^{-1}AP$ has an $X$-form where the anti-diagonal is not completely equipped with nonzero entries.
\end{remark}

\subsection{Genericity of the X-form for $R_n$-normal matrices}
\label{ss:genericity}

From now on, we denote the set of all $R_n$-normal matrices by $\mathcal{N}(R_n)$.
According to \cite[Thm.\,8]{CruzSalten} the set of diagonalizable $R_n$-normal matrices is dense in $\mathcal{N}(R_n)$. This means that for any $A \in \mathcal{N}(R_n)$ and any given $\varepsilon > 0$ there is some diagonalizable $A' \in \mathcal{N}(R_n)$ with $\Vert A - A' \Vert_2 < \varepsilon$. In this section we will prove that the set of $R_n$-normal matrices with $n$ distinct eigenvalues is dense in $\mathcal{N}(R_n)$. For this purpose we use the perplectic transformation to $X$-form established in Theorem \ref{thm:normal2X}. As a consequence, we will show in Theorem \ref{thm:genericity} that this fact justifies us to refer to the $X$-form as being generic for $R_n$-normal matrices. To prove these results, the following Proposition \ref{prop:commute} will be helpful.

\begin{proposition} \label{prop:commute}
Let $A \in \MM_{n}(\CC)$ be $R_{n}$-normal and diagonalizable. Then there exists some $R_{n}$-normal matrix $M \in \MM_{n}(\CC)$ with $n$ distinct eigenvalues such that $M$ commutes with $A$ and $A^\star$.
\end{proposition}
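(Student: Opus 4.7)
The plan is to invoke Theorem \ref{thm:normal2X} to obtain a perplectic $P$ with $X_A := P^{-1}AP$ in $X$-form, and then exploit a coordinate permutation that simultaneously block-diagonalizes every $X$-form matrix and splits $R_n$ into a direct sum of copies of $R_2$, reducing the problem to a short $2 \times 2$ construction.

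From the entrywise description $(M^\star)_{ij} = \overline{M_{n+1-j,\,n+1-i}}$ one immediately checks that the $X$-form is closed under the $\star$-operation, so $X_A^\star = P^{-1}A^\star P$ is also in $X$-form, and it commutes with $X_A$ by $R_n$-normality of $A$. Now let $P_\pi$ be the permutation matrix that rearranges the coordinates according to the pairing $(j, n+1-j)$. A direct inspection yields $P_\pi^T R_n P_\pi = R_2 \oplus R_2 \oplus \cdots \oplus R_2$ (together with a single $[1]$ in the middle when $n$ is odd), and for any $X$-form matrix $Y$ the conjugate $P_\pi^T Y P_\pi$ is block-diagonal with $\lfloor n/2 \rfloor$ blocks of size $2 \times 2$ (and a $1 \times 1$ middle block in case $n$ is odd). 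Applied to $X_A$ and $X_A^\star$ this produces commuting pairs of $R_2$-normal $2 \times 2$ matrices $(A_j, A_j^\star)$, with each $A_j$ diagonalizable by Lemma \ref{lem:block-diag}.

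Next, construct the desired blocks. A $2 \times 2$ diagonalizable matrix is either scalar or has two distinct eigenvalues. If $A_j = \lambda_j I_2$, set $M_j := \operatorname{diag}(\alpha_j, \beta_j)$ with $\alpha_j \neq \beta_j$; this is diagonal, hence $R_2$-normal, and trivially commutes with $A_j$ and $A_j^\star$. If $A_j$ has two distinct eigenvalues, set $M_j := \alpha_j I_2 + \beta_j A_j$ with $\beta_j \neq 0$; then $M_j$ has the two distinct eigenvalues $\alpha_j + \beta_j \lambda$ for $\lambda \in \sigma(A_j)$, commutes with $A_j$ and with $A_j^\star$ (using $A_j A_j^\star = A_j^\star A_j$), and is $R_2$-normal since $M_j^\star = \bar\alpha_j I_2 + \bar\beta_j A_j^\star$ commutes with $M_j$. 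If $n$ is odd, freely pick a scalar $\mu_0$ for the middle $1 \times 1$ slot. Choosing the scalar parameters generically ensures that the $n$ resulting eigenvalues are pairwise distinct.

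Finally, assemble $M' := P_\pi (M_1 \oplus \cdots \oplus M_{\lfloor n/2 \rfloor}) P_\pi^T$ (with the middle scalar included when $n$ is odd), so that $M'$ is in $X$-form, commutes with $X_A$ and $X_A^\star$ block-by-block, and is $R_n$-normal by Lemma \ref{lem:switch2} (applied to the permutation $P_\pi$, which conjugates $R_2^{\oplus}$-normality back to $R_n$-normality). Then $M := P M' P^{-1}$ is $R_n$-normal (again by Lemma \ref{lem:switch2}, since $P$ is perplectic), commutes with $A$ and $A^\star$, and carries $n$ pairwise distinct eigenvalues, as required. The main delicate step is cleanly verifying the structural claim that $P_\pi$ simultaneously block-diagonalizes the $X$-form and $R_n$; every other step then reduces to elementary $2 \times 2$ linear algebra.
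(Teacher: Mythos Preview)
Your proof is correct and essentially the same as the paper's: the permutation $P_\pi$ pairing $(j,n+1-j)$ is exactly the similarity of Remark~\ref{rem:perp_properties}\,(d) that turns the paper's perplectic-sum decomposition $X_A = N_1 \boxdot \cdots \boxdot N_\ell$ into a direct sum, so both arguments reduce to the same $2\times 2$ construction. The only inessential difference is that in the distinct-eigenvalue case the paper takes $M_j' = \alpha_j N_j$ while you take the affine $\alpha_j I_2 + \beta_j A_j$; both choices work.
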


\begin{proof}
Let $A \in \MM_{n}(\CC)$ be $R_{n}$-normal and diagonalizable. According to Theorem \ref{thm:normal2X} there is some perplectic matrix $P \in \Gl_{n}(\CC)$ such that $P^{-1}AP = X_A$ is in $X$-form, see \eqref{equ:XformThm}. If $n=2 \ell$ is even, we may express $X_A$ as
$$X_A = N_1 \boxdot \cdots \boxdot N_\ell = \big( N_1 \boxdot N_2 \big) \boxdot N_3) \boxdot \cdots \big) \boxdot N_\ell$$
with $2 \times 2$ matrices $N_i$, $i=1, \ldots , \ell$ (recall also \eqref{equ:perpsum-notation}). That is
$$ X_A = \begin{bmatrix} n_{11}^1 & & & & & & & n_{12}^1 \\ & n_{11}^2 & & & & & n_{12}^2 & \\ & & \ddots & & & \iddots & & \\ & & & n_{11}^\ell & n_{12}^\ell & & & \\ & & & n_{21}^\ell & n_{22}^\ell & & & \\ & & \iddots & & & \ddots & & \\ & n_{21}^2 & & & & & n_{22}^2 & \\ n_{21}^1 & & & & & & & n_{22}^1 \end{bmatrix}$$
where
$$N_1 = \begin{bmatrix} n_{11}^1 & n_{12}^1 \\ n_{21}^1 & n_{22}^2 \end{bmatrix}, \; N_2 = \begin{bmatrix} n_{11}^2 & n_{12}^2 \\ n_{21}^2 & n_{22}^2 \end{bmatrix}, \ldots , N_\ell = \begin{bmatrix} n_{11}^\ell & n_{12}^\ell \\ n_{21}^\ell & n_{22}^\ell \end{bmatrix}.$$
In case $n= 2\ell +1$ is odd we may express $X_A$ in the same form where now $\ell = \lceil n/2 \rceil$ and $N_\ell \in \CC$ is just a scalar. As $P$ is perplectic, $X_A$ is again $R_n$-normal according to Lemma \ref{lem:switch2}. For the rest of the proof we confine ourselves to the case $n = 2\ell$. When $n$ is odd, an analogous reasoning gives the same results.

At first, we consider the explicit form of $X_A^\star X_A$ and $X_AX_A^\star$. With $R_n = R_2 \boxdot \cdots \boxdot R_2$ ($\ell$ factors) we find $X_A^\star = \boxdot_{i=1}^\ell N_i^\star$ and therefore
\begin{equation} X_A^\star X_A = \boxdot_{i=1}^\ell R_2N_i^HR_2N_i \quad \textnormal{and} \quad X_A X_A^\star = \boxdot_{i=1}^\ell N_iR_2N_i^HR_2 \label{equ:R2normal} \end{equation}
using \eqref{equ:perpsum_1} and noting that $R_2^{-1}=R_2$.
As both matrices are equal, we may compare the terms in \eqref{equ:R2normal} and see that each $N_j$ is $R_2$-normal for all $j=1, \ldots , \ell$.
Now recall additionally that $X_A$ is similar to $N_1 \oplus \cdots \oplus N_\ell$ according to Remark \ref{rem:perp_properties} (d), so each $N_j$ is diagonalizable (since $A$ and consequently $X_A$ are diagonalizable).
For scalars $\alpha_k \in \CC$, $k=1, \ldots , \ell$, whose desired property will be clear in a moment, we define matrices $M_k' \in \MM_{2}(\CC)$ according to the following rules:
\begin{enumerate}[(a)]
\item If $N_k$ has two identical eigenvalues we define $M_k' := \textnormal{diag}( \alpha_k, 1 + \alpha_k)$ which has two distinct eigenvalues $\alpha_k$ and $1+ \alpha_k$. (Note that, in this case, $N_k$ is just a multiple of the identity $I_2$.)
\item If $N_k$ has two distinct eigenvalues, we define $M_k' := \alpha_k N_k$ for $\alpha_k \neq 0$ which also has two distinct eigenvalues.
\end{enumerate}
Now we define $$M' := (M_1' \boxdot M_2') \boxdot M_3') \boxdot \cdots ) \boxdot M_\ell' \in \MM_{n}(\CC).$$
 As $M'$ is similar to $M_1' \oplus \cdots \oplus M_\ell'$ according to Remark \ref{rem:perp_properties} (d), the eigenvalues of $M'$ are those of $M_1', M_2', \ldots , M_\ell'$. The crucial observation is that
 we are always able to choose $\alpha_1, \ldots , \alpha_n$ in such a way that $M'$ has $n$ distinct eigenvalues.

Now each $M_k' \in \MM_2(\CC)$ is $R_2$-normal. To accept this, note that any diagonal matrix $M_k'$ (as in (a) above) is always $R_2$-normal and any scalar multiple $M_k' := \alpha_k N_k$ of an $R_2$-normal matrix (as in (b) above) remains to be $R_2$-normal. With the same calculations as in \eqref{equ:R2normal} it is easy to see that $M'$ is $R_{n}$-normal. Moreover, by construction, each $M_k'$ commutes with each $N_k$ and with $N_k^\star = R_2 N_k^H R_2$. Therefore, $M'$ commutes with $X_A$ and with $X_A^\star = N_1^\star \boxdot \cdots \boxdot N_\ell^\star$. This implies that $M := PM'P^{-1}$, which is $R_n$-normal according to Lemma \ref{lem:switch2} and whose $n$ eigenvalues are distinct, commutes with $A = PX_AP^{-1}$ and $A^\star = PX_A^\star P^{-1}$. This proves the statement.
\end{proof}

The following Lemma \ref{lem:polynomial} is important for the proof of Theorem \ref{thm:approx-distinct}. It can be found in \cite[Ex.\,2\,(ii)]{CruzSalten} or \cite[Sec.\,7]{WeyrForm} and is stated here without proof.

\begin{lemma} \label{lem:polynomial}
There exists a polynomial
\begin{equation} p(x_{11},x_{12}, \ldots , x_{nn}) \in \CC[x_{11}, \ldots , x_{nn}] \label{equ:polynomial} \end{equation}
 in $n^2$ variables $x_{jk}$ such that, for any $A = [a_{ij}]_{i,j} \in \MM_{n}(\CC)$,
\begin{equation} p(A) := p(a_{11}, a_{12}, \ldots , a_{nn}) = 0 \label{equ:polynomial1} \end{equation}
if and only if $A$ has at least one multiple eigenvalue\footnote{In other words, the set of all matrices with at least one multiple eigenvalues is an algebraic variety.}.
\end{lemma}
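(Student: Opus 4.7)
The plan is to construct $p$ as the discriminant of the characteristic polynomial, viewed as a polynomial in the entries of a generic matrix.

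First I would introduce indeterminates $x_{11}, x_{12}, \ldots, x_{nn}$ and form the ``generic'' matrix $X = [x_{ij}]_{i,j}$. Its characteristic polynomial
$$\chi_X(t) = \det(tI_n - X) = t^n + c_{n-1}(x_{11}, \ldots, x_{nn})\, t^{n-1} + \cdots + c_0(x_{11}, \ldots, x_{nn})$$
has coefficients $c_0, \ldots, c_{n-1}$ that are, by the Leibniz expansion of the determinant, polynomials in the $n^2$ entries $x_{ij}$. For any concrete matrix $A = [a_{ij}] \in \MM_n(\CC)$ we then have $\chi_A(t) = \chi_X(t)\big|_{x_{ij}=a_{ij}}$, so the usual characteristic polynomial of $A$ is obtained by specializing the indeterminates to the entries of $A$.

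Next I would invoke the classical discriminant. For any monic polynomial $q(t) = \prod_{i=1}^n (t - \lambda_i)$ of degree $n$, its discriminant is
$$\textnormal{disc}(q) = \prod_{1 \leq i < j \leq n} (\lambda_i - \lambda_j)^2,$$
and it is well known that $\textnormal{disc}(q)$ can be expressed as a universal polynomial $\Delta_n$ in the coefficients of $q$ (via the resultant formula $\textnormal{disc}(q) = (-1)^{n(n-1)/2}\textnormal{Res}(q,q')$, whose entries are polynomials in those coefficients). Then I would define
$$p(x_{11}, x_{12}, \ldots, x_{nn}) := \Delta_n\bigl( c_0(x_{11}, \ldots, x_{nn}), \ldots, c_{n-1}(x_{11}, \ldots, x_{nn}) \bigr),$$
which is a polynomial in the $n^2$ variables $x_{ij}$ as a composition of polynomials.

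Finally I would verify the ``if and only if'': for any $A \in \MM_n(\CC)$ with eigenvalues $\lambda_1, \ldots, \lambda_n$ (listed with algebraic multiplicity),
$$p(A) = \textnormal{disc}(\chi_A) = \prod_{1 \leq i < j \leq n}(\lambda_i - \lambda_j)^2,$$
and this product vanishes if and only if $\lambda_i = \lambda_j$ for some $i \neq j$, i.e.\ if and only if $A$ has at least one multiple eigenvalue. The one step that warrants a citation rather than a full argument is the existence of the universal polynomial $\Delta_n$ expressing the discriminant in terms of the coefficients; this is a standard fact (e.g., via the Sylvester matrix of $q$ and $q'$) and requires no new calculation here.
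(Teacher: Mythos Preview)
Your argument is correct and is in fact the standard one: the discriminant of the characteristic polynomial, viewed as a polynomial in the matrix entries, vanishes precisely on matrices with a repeated eigenvalue. The construction via the resultant $\textnormal{Res}(\chi_A,\chi_A')$ is exactly the right way to justify that $\Delta_n$ is a polynomial in the coefficients, and hence that $p$ is a polynomial in the $x_{ij}$.

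As for comparison with the paper: there is nothing to compare, because the paper does not prove this lemma at all. It simply states the result and refers to \cite[Ex.\,2\,(ii)]{CruzSalten} and \cite[Sec.\,7]{WeyrForm}. Your write-up therefore supplies what the paper deliberately omits, and does so by the classical route one would find in those references.
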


In other words, Lemma \ref{lem:polynomial} states that one can tell if a given matrix $A \in \MM_{n}(\CC)$ has a multiple eigenvalue by evaluating $p$ in \eqref{equ:polynomial} at $A$, that is, at $(a_{11}, a_{12}, \ldots , a_{nn})$. For the proof of the following Theorem \ref{thm:approx-distinct} notice that, if $N,M \in \MM_{n}(\CC)$ are two $R_n$-normal matrices and $M$ commutes with $N$ and $N^\star$, then $zN + M$ is also $R_{n}$-normal for any $z \in \CC$. This is easily seen since
\begin{equation} \begin{aligned}
(zN+M)^\star(zN+M) &= \overline{z}z N^\star N + \overline{z} N^\star M + zM^\star N + M^\star M \\
(zN+M)(zN+M)^\star &= z \overline{z} NN^\star + z N M^\star + \overline{z} M N^\star + MM^\star
\end{aligned} \label{equ:normalMN} \end{equation}
using the $R_{n}$-normality of $M$ and $N$ and the commutativity of $M$ and $N^\star$ (moreover, note that $M^\star N = (N^\star M)^\star$ and that $NM^\star = (M N^\star)^\star$).

We are now able to prove our first main result of this section.
Theorem \ref{thm:approx-distinct} states that the set of $R_n$-normal matrices with $n$ distinct eigenvalues is dense in $\mathcal{N}(R_n)$.

\begin{theorem} \label{thm:approx-distinct}
Let $A \in \MM_{n}(\CC)$ be $R_n$-normal and let $\varepsilon > 0$ be given. Then there exists some $R_n$-normal matrix $\hat{A} \in \MM_{n}(\CC)$ with $n$ distinct eigenvalues such that $\Vert A - \hat{A} \Vert_2 < \varepsilon$.
\end{theorem}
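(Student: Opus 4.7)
The plan is to combine three ingredients already at hand: the density of diagonalizable $R_n$-normal matrices inside $\mathcal{N}(R_n)$ from \cite[Thm.\,8]{CruzSalten}, the existence of a commuting $R_n$-normal ``separator'' $M$ with $n$ distinct eigenvalues provided by Proposition \ref{prop:commute}, and the algebraic-variety fact from Lemma \ref{lem:polynomial}. The structure is a two-step approximation: first replace $A$ by a nearby diagonalizable $R_n$-normal matrix $A'$, and then perturb $A'$ along a one-parameter family $A' + zM$ that stays inside $\mathcal{N}(R_n)$ and acquires distinct eigenvalues for generic $z$.

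In more detail, I would first invoke \cite[Thm.\,8]{CruzSalten} to pick a diagonalizable $A' \in \mathcal{N}(R_n)$ with $\Vert A - A' \Vert_2 < \varepsilon/2$. Proposition \ref{prop:commute} then supplies an $R_n$-normal matrix $M \in \MM_n(\CC)$ with $n$ pairwise distinct eigenvalues such that $M$ commutes with both $A'$ and $(A')^\star$. The identities in \eqref{equ:normalMN} (applied with $N = A'$) show immediately that $A' + zM$ is $R_n$-normal for every $z \in \CC$; this is the key fact that keeps the whole perturbation inside $\mathcal{N}(R_n)$.

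Next I would argue that $A' + zM$ has $n$ distinct eigenvalues for all but finitely many $z$. Because $M$ is diagonalizable (its eigenvalues are distinct) and $A'$ is diagonalizable and commutes with $M$, Lemma \ref{lem:simult-diag} gives a single $S \in \Gl_n(\CC)$ with $S^{-1}A'S = \textnormal{diag}(\alpha_1,\ldots,\alpha_n)$ and $S^{-1}MS = \textnormal{diag}(\mu_1,\ldots,\mu_n)$ where the $\mu_i$ are pairwise distinct. Hence the eigenvalues of $A' + zM$ are $\alpha_i + z\mu_i$, and the pairs collide only on the finite set
\[
\Bigl\{\,-\tfrac{\alpha_i - \alpha_j}{\mu_i - \mu_j} : i \neq j\,\Bigr\}.
\]
Equivalently, letting $p$ be the polynomial from Lemma \ref{lem:polynomial}, the map $z \mapsto p(A'+zM)$ is a polynomial in $z$ that is not identically zero (some $z_0$ outside the exceptional set witnesses this), so it has only finitely many zeros. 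Choosing any $z$ avoiding these zeros and with $|z| < \varepsilon/(2(\Vert M \Vert_2 + 1))$, the matrix $\hat{A} := A' + zM$ is $R_n$-normal, has $n$ distinct eigenvalues, and satisfies
\[
\Vert A - \hat{A}\Vert_2 \leq \Vert A - A'\Vert_2 + |z|\,\Vert M\Vert_2 < \varepsilon.
\]

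I do not expect a serious obstacle here: the heavy lifting was done in Theorem \ref{thm:normal2X} and Proposition \ref{prop:commute}. The only subtlety is to keep the perturbation inside $\mathcal{N}(R_n)$, which is exactly why one cannot simply perturb $A$ by an arbitrary diagonal; the commutativity of $M$ with both $A'$ and $(A')^\star$ built into Proposition \ref{prop:commute} is what makes the linear family $A' + zM$ work via \eqref{equ:normalMN}.
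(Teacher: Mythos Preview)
Your proof is correct and follows essentially the same strategy as the paper's: approximate $A$ by a diagonalizable $R_n$-normal $A'$, invoke Proposition~\ref{prop:commute} for a commuting $R_n$-normal separator with distinct eigenvalues, and perturb along a one-parameter $R_n$-normal family using \eqref{equ:normalMN}. The only difference is cosmetic: the paper parametrizes as $M(z)=zA'+N$, uses Lemma~\ref{lem:polynomial} with $\tilde p(0)=p(N)\neq 0$, and then passes to $A'+cN=cM(c^{-1})$, whereas you parametrize directly as $A'+zM$ and obtain the finitely many exceptional $z$ explicitly via simultaneous diagonalization (Lemma~\ref{lem:simult-diag}), which is slightly cleaner and makes Lemma~\ref{lem:polynomial} optional.
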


\begin{proof}
Let $A \in \MM_{n}(\CC)$ be $R_n$-normal and let $\varepsilon > 0$ be given. According to \cite[Thm.\,7]{CruzSalten}, there exists some diagonalizable, $R_n$-normal $A' \in \MM_{n}(\CC)$ such that $\Vert A - A' \Vert_2 < \varepsilon/2$. Moreover, according to Proposition \ref{prop:commute}, there exists some $R_n$-normal $N \in \MM_{n}(\CC)$ with $n$ distinct eigenvalues that commutes with $A'$ and $(A')^\star$. Then, for any $z \in \CC$, the matrix $M(z) := zA' + N$ is $R_n$-normal (see \eqref{equ:normalMN} and the discussion above). Let $p(x_{11}, \ldots , x_{nn})$ be the polynomial from Lemma \ref{lem:polynomial}. Using the notation from \eqref{equ:polynomial1}, we now consider $\tilde{p}(z) := p(M(z))$ as a polynomial in the single variable $z$. Observe that $\tilde{p}(0) = p(M(0)) = p(N) \neq 0$ since $N$ has $n$ distinct eigenvalues. Therefore,  $\tilde{p} \neq 0$ is not the zero-polynomial.

Recall from Lemma \ref{lem:polynomial} that $\tilde{p}(z_0) \neq 0$ is a sufficient and necessary condition for $M(z_0)$ to have $n$ distinct eigenvalues. Now, since $\tilde{p}(z)$ has only a finite number of roots, we conclude that almost every matrix $M(z) = zA'+N$ has $n$ distinct eigenvalues. Therefore, $A' + cN = c(c^{-1}A'+N)$ also has $n$ distinct eigenvalues for almost every $c \in \CC$, $c \neq 0$. As $A' + cN = c M(c^{-1})$ is a scalar multiple of a $R_n$-normal matrix, it is $R_n$-normal, too.

Now choose some $c_0 \in \CC$ with $| c_0 | < \varepsilon /(2 \Vert N \Vert_2)$ such that $\tilde{p}(c_0^{-1}) \neq 0$ and define $\hat{A} := A' + c_0N$. Then $\hat{A}$ is $R_n$-normal and has $n$ distinct eigenvalues. Moreover,
$$ \Vert A' - \hat{A} \Vert_2 = \Vert A' - \big( A' + c_0 N \big) \Vert_2 = | c_0 | \cdot \Vert N \Vert_2 < \frac{\varepsilon}{2}$$
and it follows that
$$ \Vert A - \hat{A} \Vert_2 \leq \Vert A - A' \Vert_2 + \Vert A' - \hat{A} \Vert_2 < \frac{\varepsilon}{2} + \frac{\varepsilon}{2} = \varepsilon.$$
Thus we have shown that there exists some $R_n$-normal matrix with $n$ distinct eigenvalues with distance less than $\varepsilon$ from $A$ and the proof is complete.
\end{proof}

Recall that $\MM_{n}(\CC)$ can be considered as a topological space with basis $B_R(A) = \lbrace A' \in \MM_{n}(\CC) \, : \, \Vert A-A' \Vert_2 < R \rbrace$ for $A \in \MM_{n}(\CC)$ and $R \in \RR$, $R > 0$ (see, e.g. \cite[Sec.\,11.2]{Hartfiel}). The set $\mathcal{N}(R_n)$ of $R_n$-normal matrices can thus be interpreted as a topological space on its own equipped with its subspace topology \cite[Sec.\,1.5]{top}. Per definition, a subset $\mathcal{S}$ of $\mathcal{N}(R_n)$ is open if $\mathcal{S}$ is the intersection of $\mathcal{N}(R_n)$ with some open subset of $\MM_{n}(\CC)$. It is well-known that the set $\mathcal{E}' \subset \MM_{n}(\CC)$ of matrices with $n$ distinct eigenvalues is open in the topological space $\MM_{n}(\CC)$, cf. \cite[Thm.\,11.5]{Hartfiel}. Thus, $\mathcal{E} := \mathcal{E}' \cap \mathcal{N}(R_n)$ is an open subset of $\mathcal{N}(R_n)$ (with respect to its subspace topology) whose closure is all of $\mathcal{N}(R_n)$ according to the density established in Theorem \ref{thm:approx-distinct}.  Consequently, the complement of $\mathcal{E}$ in $\mathcal{N}(R_n)$ is a nowhere dense subset in $\mathcal{N}(R_n)$. Since all matrices in $\mathcal{E}$ have pairwise distinct eigenvalues, they are diagonalizable and thus Theorem \ref{thm:normal2X} applies. It is shown in \cite[Thms.\,5,6]{CruzSalten} that the set of matrices with $n$ distinct eigenvalues is dense in the sets of all per-Hermitian, perskew-Hermitian and perplectic matrices, too. Thus, with exactly the same reasoning as above the topological analysis carried out for $\mathcal{N}(R_n)$ applies to these sets of matrices. This leads us to the second main result of this section stated in Theorem \ref{thm:genericity}.

\begin{theorem} \label{thm:genericity}
With $\mathcal{S} \subseteq \MM_n(\CC)$ being one of the sets of all per-Hermitian, perskew-Hermitian, perplectic or $R_n$-normal matrices, the following is true:
the set of all matrices $A \in \mathcal{S}$ for which a perplectic matrix $P \in \Gl_n(\CC)$ exists such that $P^{-1}AP = X_A$ is in $X$-form is open and dense. Consequently, the set of all matrices $A \in \mathcal{S}$ for which such a perplectic similarity transformation does not exist is nowhere dense in $\mathcal{S}$.
\end{theorem}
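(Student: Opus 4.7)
The plan is to observe that the substantive ingredients are already at hand, so the proof is a matter of assembly. Write $\mathcal{T}_\mathcal{S}$ for the set of $A \in \mathcal{S}$ that admit a perplectic similarity $P^{-1}AP = X_A$ in $X$-form, and let $\mathcal{E}_\mathcal{S} := \mathcal{S} \cap \mathcal{E}'$, where $\mathcal{E}' \subseteq \MM_n(\CC)$ is the set of matrices with $n$ pairwise distinct eigenvalues. The strategy is to prove $\mathcal{E}_\mathcal{S} \subseteq \mathcal{T}_\mathcal{S}$ and then transfer the openness and density of $\mathcal{E}_\mathcal{S}$ in $\mathcal{S}$ to the desired conclusions about $\mathcal{T}_\mathcal{S}$.

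For the inclusion $\mathcal{E}_\mathcal{S} \subseteq \mathcal{T}_\mathcal{S}$, I would first recall that any matrix with $n$ distinct eigenvalues is automatically diagonalizable. Moreover, per-Hermitian ($A = A^\star$), perskew-Hermitian ($A = -A^\star$) and perplectic ($A^\star = A^{-1}$) matrices are immediately $R_n$-normal, because in each case $A^\star \in \{A, -A, A^{-1}\}$ commutes trivially with $A$. Consequently, for every one of the four choices of $\mathcal{S}$, each $A \in \mathcal{E}_\mathcal{S}$ is a diagonalizable $R_n$-normal matrix, and Theorem \ref{thm:normal2X} supplies a perplectic $P$ with $P^{-1}AP$ in $X$-form, i.e.\ $A \in \mathcal{T}_\mathcal{S}$.

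For the topological conclusions, $\mathcal{E}'$ is open in $\MM_n(\CC)$ by \cite[Thm.\,11.5]{Hartfiel}, so $\mathcal{E}_\mathcal{S}$ is open in the subspace topology of $\mathcal{S}$. Density of $\mathcal{E}_\mathcal{S}$ in $\mathcal{S}$ follows from Theorem \ref{thm:approx-distinct} when $\mathcal{S} = \mathcal{N}(R_n)$ and from \cite[Thms.\,5,6]{CruzSalten} in the other three cases. Together with the inclusion above, this exhibits $\mathcal{E}_\mathcal{S}$ as an open dense subset of $\mathcal{T}_\mathcal{S}$; in particular $\mathcal{T}_\mathcal{S}$ is dense in $\mathcal{S}$, and $\mathcal{S}\setminus\mathcal{T}_\mathcal{S} \subseteq \mathcal{S}\setminus\mathcal{E}_\mathcal{S}$. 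Since the complement of an open dense set is nowhere dense and any subset of a nowhere dense set is again nowhere dense, $\mathcal{S}\setminus\mathcal{T}_\mathcal{S}$ is nowhere dense in $\mathcal{S}$, which is the stated consequence.

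I do not anticipate any genuine obstacle here: Theorems \ref{thm:normal2X} and \ref{thm:approx-distinct} together with the density results in \cite{CruzSalten} do essentially all the work. The only mildly delicate point is to read the claim that $\mathcal{T}_\mathcal{S}$ is ``open and dense'' as being witnessed by the open dense subset $\mathcal{E}_\mathcal{S} \subseteq \mathcal{T}_\mathcal{S}$ rather than by $\mathcal{T}_\mathcal{S}$ itself, which is precisely what is needed for the nowhere-dense statement about the complement.
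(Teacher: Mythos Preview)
Your proposal is correct and follows essentially the same route as the paper: intersect $\mathcal{S}$ with the open set $\mathcal{E}'$ of matrices having $n$ distinct eigenvalues, invoke Theorem~\ref{thm:approx-distinct} and \cite[Thms.\,5,6]{CruzSalten} for density, and use Theorem~\ref{thm:normal2X} to place this open dense set inside $\mathcal{T}_\mathcal{S}$. Your explicit remark that per-Hermitian, perskew-Hermitian and perplectic matrices are automatically $R_n$-normal, and your observation that the ``open and dense'' assertion is really witnessed by $\mathcal{E}_\mathcal{S}$ rather than by $\mathcal{T}_\mathcal{S}$ itself, are both clarifications the paper leaves implicit.
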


According to \cite{top} an open set of a topological space whose interior is dense can reasonably considered to be large. In turn, its complement is small from a topological point of view. This justifies calling the $X$-form established in Theorem \ref{thm:normal2X} under perplectic similarity \emph{generic} for the sets of per-Hermitian, perskew-Hermitian, perplectic and $R_n$-normal matrices.

\section{A generic canonical form for $J_{2n}$-normal matrices}
\label{s:symplectic}

From here on we consider the indefinite scalar product $(x,y) \mapsto [x,y]_{J_{2m}} := x^HJ_{2m}y$ defined on $\CC^{2m} \times \CC^{2m}$ for the skew-Hermitian matrix
$$J_{2m} = \begin{bmatrix} & I_m \\ -I_m & \end{bmatrix} \in \Gl_{2m}(\RR).$$
The eigenvalues of $J_{2m}$ are $+i$ and $-i$ with multiplicities $\mathfrak{m}(J_{2n},+i) = $ and $\mathfrak{m}(J_{2n},-i)=m$. It is common to refer to $J_{2m}$-self/skewadjoint matrices as skew-Hamiltonian and Hamiltonian, respectively\footnote{Sometimes these matrices are called $J_{2m}$-Hermitian and $J_{2m}$-skew-Hermitian, see \cite{Mack07}.}. Matrices that are $J_{2m}$-unitary are in general called symplectic. For the set of all $J_{2m}$-normal matrices we will use the notation $\mathcal{N}(J_{2m})$.
In this section we use the results obtained in Section \ref{s:perplectic} to derive a generic canonical form for $J_{2m}$-normal matrices. To this end, we set $n := 2m$ for the whole section whenever $n$ is not further specified.

First consider the $2m \times 2m$ unitary matrix
\begin{equation} U := \begin{bmatrix} I_m & \\ & -iR_m \end{bmatrix} \in \Gl_{2m}(\CC). \label{equ:U-transform} \end{equation}
A straight forward calculation shows that $U^H(iR_n)U = J_{2m}$. Over $\CC^{n} \times \CC^{n}$, the classes of per-Hermitian, perskew-Hermitian, perplectic and $R_n$-normal matrices
can now be related in a natural way to skew-Hamiltonian, Hamiltonian, symplectic and $J_{2m}$-normal matrices. These relations are summarized in Lemma \ref{lem:perp2symp}.

\begin{lemma} \label{lem:perp2symp}
Let $A \in \MM_{2m}(\CC)$ and $n=2m$. The following relations hold for the classes of matrices introduced in Section \ref{s:intro} with respect to the indefinite scalar products $[ \cdot, \cdot]_{J_{2m}}$ and $[ \cdot, \cdot]_{R_n}$.
\begin{enumerate}[(a)]
\item If $A$ is skew-Hamiltonian, then $A' := UAU^H$ is per-Hermitian. In turn, $U^HAU$ is skew-Hamiltonian whenever $A$ is per-Hermitian.
\item If $A$ is Hamiltonian, then $A' := UAU^H$ is perskew-Hermitian. On the other hand, $U^HAU$ is Hamiltonian whenever $A$ is perskew-Hermitian.
\item If $A$ is symplectic, then $A' := UAU^H$ is perplectic. In turn, $U^HAU$ is symplectic whenever $A$ is perplectic.
\item If $A$ is $J_{2m}$-normal, then $A' := UAU^H$ is $R_n$-normal. On the other hand $U^HAU$ is $J_{2m}$-normal whenever $A$ is $R_n$-normal.
\end{enumerate}
\end{lemma}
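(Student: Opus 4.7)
The plan is to deduce all four statements uniformly from Lemma~\ref{lem:switch2} together with a single matrix identity. The identity I would verify first is
\[U^H R_n U = -iJ_{2m}, \qquad \text{equivalently} \qquad UJ_{2m}U^H = iR_n,\]
which follows from a short block computation: writing $R_n$ as $\begin{bmatrix} 0 & R_m \\ R_m & 0 \end{bmatrix}$ and $U = \textnormal{diag}(I_m, -iR_m)$, the whole calculation reduces to $R_m^2 = I_m$. This is essentially just a restatement of the identity $U^H(iR_n)U = J_{2m}$ already noted above the lemma.

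Next I would record the elementary observation that each of the four defining conditions for $B$-selfadjoint, $B$-skewadjoint, $B$-unitary and $B$-normal matrices is invariant under scaling of $B$ by any nonzero scalar $\alpha \in \CC$: in each of the equations $A = B^{-1}A^HB$, $-A = B^{-1}A^HB$, $A^HBA = B$, and $AB^{-1}A^HB = B^{-1}A^HBA$, the factor $\alpha$ cancels on both sides. Consequently a matrix is $(iR_n)$-selfadjoint/skewadjoint/unitary/normal if and only if it is $R_n$-selfadjoint/skewadjoint/unitary/normal, and analogously $(-iJ_{2m})$ defines the same four classes as $J_{2m}$.

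With these two ingredients in hand, the forward directions of (a)--(d) all follow by applying Lemma~\ref{lem:switch2} with the choice $T := U^H$. Since $U$ is unitary, $T^{-1} = U$ and $T^H = U$; taking $B = J_{2m}$ yields the transformed structure matrix $B' = T^HBT = UJ_{2m}U^H = iR_n$. Lemma~\ref{lem:switch2} therefore asserts that $A$ carries one of the four $J_{2m}$-properties if and only if $A' = UAU^H$ carries the corresponding $(iR_n)$-property, which by the preceding paragraph is just the corresponding $R_n$-property. The reverse implications are obtained symmetrically by choosing $T := U$ and $B = R_n$, whereupon $B' = U^HR_nU = -iJ_{2m}$ takes the role of $J_{2m}$ up to the harmless scalar.

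I do not anticipate any serious obstacle here; the entire lemma is essentially a repackaging of Lemma~\ref{lem:switch2} under the unitary change of coordinates $U$, which intertwines $R_n$ with $J_{2m}$ up to a factor of $\pm i$. The only real bookkeeping is to keep straight whether $U$ or $U^H$ plays the role of $T$ and to remember that the factors of $\pm i$ introduced by the intertwining relation are harmless precisely because the four structural classes are invariant under scaling $B$.
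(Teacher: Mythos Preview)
Your argument is correct. The key identity $UJ_{2m}U^H = iR_n$ (equivalently $U^HR_nU = -iJ_{2m}$) is verified exactly as you sketch, and the observation that the four structural conditions are invariant under replacing $B$ by $\alpha B$ for any $\alpha \in \CC\setminus\{0\}$ is an easy check from the definitions. With those in hand, Lemma~\ref{lem:switch2} applied with $T=U^H$ (and $T=U$ for the reverse directions) delivers all eight implications simultaneously.

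Your route differs from the paper's. The paper does not invoke Lemma~\ref{lem:switch2} here; instead it proves part~(a) by a direct matrix computation using $UJ_{2m} = (iR_n)U$ and $R_n^2=I_n$, and then declares that (b)--(d) proceed in a similar manner. Your approach is more uniform and conceptual: it isolates the two genuinely new ingredients (the intertwining identity and the scalar-invariance of the four classes) and then reduces everything to a lemma already on the books, avoiding four separate case-by-case verifications. The paper's approach, by contrast, is self-contained and does not rely on remembering that Lemma~\ref{lem:switch2} was stated for arbitrary $B\in\Gl_n(\CC)$ rather than only for $B=\pm B^H$. Both are perfectly valid; yours is shorter and makes clearer why the lemma is essentially tautological once the intertwining relation is known.
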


\begin{proof}
All four cases can be proved by direct calculations using $UJ_{2m}U^H = iR_n$, i.e. $UJ_{2m} = (iR_n)U$ and the equations specifying the structures in (a), (b), (c) and (d).

(a) Assume $A \in \MM_n(\CC)$ is per-Hermitian, that is $R_nA^HR_n = A$ holds. Then $U^HAU$ is skew-Hamiltonian since
\begin{align*}
J_{2m}^{-1} \big( U^HAU \big)^HJ_{2m} &= J_{2m}^{-1} \big( U^HA^HU \big) J_{2m} = J_{2m}^{-1} \big( U^H \big( R_n A R_n \big) U \big) J_{2m} \\
&= \big( UJ_{2m} \big)^H R_n A R_n \big(XJ_{2m} \big) \\
&= -i^2 U^H \big( R_n R_n \big) A \big( R_n R_n \big) U = U^HAU
\end{align*}
where we used that $J_{2m}^{-1} = J_{2m}^H = J_{2m}^T$ and $R_nR_n = I_n$. On the other hand, if $A \in \MM_{2m}(\CC)$ is skew-Hamiltonian, i.e. $J_{2m}^HA^HJ_{2m} = A$, then $UAU^H$ is per-Hermitian because
\begin{align*}
R_n \big( UAU^H \big)^H R_n &= R_n UA^HU^H R_n = R_n U J_{2m} A J_{2m}^H U^H R_n \\
&= R_n \big( iR_nU \big) A \big(iR_n U \big)^H R_n \\
&= -i^2 \big( R_n R_n \big) UAU^H \big(R_n R_n \big) = UAU^H.
\end{align*}
The proofs for (b), (c) and (d) proceed in a similar manner.
\end{proof}

In other words, Lemma \ref{lem:perp2symp} states that there exists a one-to-one correspondence (i.e. a bijection) between the sets of $B$-selfadjoint, $B$-skewadjoint, $B$-unitary and $B$-normal matrices for $B=J_{2m}$ and $B=R_n$. For the proofs of this section we will be switching to and fro between these sets via the similarity with $U$ in \eqref{equ:U-transform}.

We say a matrix $A \in \MM_{2m}(\CC)$ is in four-diagonal-form, if
\begin{equation} A = \begin{bmatrix} D_{11} & D_{12} \\ D_{21} & D_{22} \end{bmatrix} = \left[ \tikz[baseline=+4mm]{ \draw[color=black] (0,1) -- (1,0); \draw[color=black] (0,0.5) -- (0.5,0); \draw[color=black] (0.5,1) -- (1,0.5); } \right] \label{equ:4diagonals} \end{equation}
where $D_{11}, D_{21}, D_{12}, D_{22} \in \MM_m(\CC)$ are diagonal matrices.

Based on the results from Section \ref{s:perplectic} we may now establish a canonical form of $J_{2m}$-normal diagonalizable matrices under symplectic similarity transformations.

\begin{theorem} \label{thm:normal2-4diags}
Let $A \in \MM_{2m}(\CC)$ be $J_{2m}$-normal and diagonalizable. Then there exists some symplectic matrix $S \in \Gl_{2m}(\mathbb{C})$ such that
\begin{equation} S^{-1}AS = D_A^{\fourdiag} = \left[ \tikz[baseline=+4mm]{ \draw[color=black] (0,1) -- (1,0); \draw[color=black] (0,0.5) -- (0.5,0); \draw[color=black] (0.5,1) -- (1,0.5); } \right] \label{equ:normal2-4diags} \end{equation}
is a matrix in four-diagonal-form.
\end{theorem}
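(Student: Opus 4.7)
The plan is to transfer the problem from the symplectic to the perplectic setting via the unitary matrix $U$ from \eqref{equ:U-transform}, apply Theorem \ref{thm:normal2X}, and then transfer the resulting canonical form back.

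First I would set $A' := UAU^H$. By Lemma \ref{lem:perp2symp}(d), $A'$ is $R_n$-normal (with $n = 2m$), and since $U$ is unitary, $A'$ is diagonalizable whenever $A$ is. Theorem \ref{thm:normal2X} then supplies a perplectic matrix $P \in \Gl_{2m}(\CC)$ with $P^{-1}A'P = X_{A'}$ in $X$-form. Setting $S := U^H P U$, Lemma \ref{lem:perp2symp}(c) guarantees that $S$ is symplectic, and a direct computation using $U^{-1} = U^H$ gives
\[
S^{-1} A S = U^H P^{-1} (UAU^H) P U = U^H X_{A'} U.
\]

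The remaining step, and the only real piece of work, is to verify that $U^H X U$ is in four-diagonal-form for every $2m \times 2m$ matrix $X$ in $X$-form. Writing
\[
X = \begin{bmatrix} D_{11} & R_m D_{12} \\ R_m D_{21} & D_{22} \end{bmatrix}, \qquad U = \begin{bmatrix} I_m & \\ & -iR_m \end{bmatrix},
\]
with $D_{11}, D_{12}, D_{21}, D_{22} \in \MM_m(\CC)$ diagonal, a direct block multiplication yields
\[
U^H X U = \begin{bmatrix} D_{11} & -i R_m D_{12} R_m \\ i D_{21} & R_m D_{22} R_m \end{bmatrix}.
\]
Since $R_m D R_m$ simply reverses the diagonal entries of a diagonal matrix $D$, all four blocks above are diagonal, so $U^H X U$ has the form \eqref{equ:4diagonals}. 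Applying this to $X = X_{A'}$ gives the claim with $D_A^{\fourdiag} := U^H X_{A'} U$.

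I do not expect a serious obstacle here: once the dictionary supplied by Lemma \ref{lem:perp2symp} is in place, essentially all the content is already stored in Theorem \ref{thm:normal2X}, and the transfer from $X$-form to four-diagonal-form is a routine block computation exploiting that conjugation by $R_m$ preserves diagonality. The only mild subtlety is remembering that $S^{-1} = S^\star$ for symplectic $S$ (so the conclusion could equally be phrased as $S^\star A S$, in line with the formulation of Theorem \ref{thm:normal2X}), and that the odd-size case of $X$-form does not arise because $n = 2m$ is even by hypothesis.
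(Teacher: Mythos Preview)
Your proposal is correct and follows essentially the same route as the paper: transfer to the perplectic setting via $U$, apply Theorem~\ref{thm:normal2X}, set $S := U^H P U$, and check by the identical block computation that $U^H X_{A'} U$ has diagonal blocks. The only cosmetic difference is that you phrase the final step as a general claim about arbitrary $X$-form matrices, whereas the paper carries out the computation directly on $X_{A'}$.
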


\begin{proof}
Let $A \in \MM_{2m}(\CC)$ be $J_{2m}$-normal and diagonalizable and set $n=2m$. According to Lemma \ref{lem:perp2symp} the matrix $A' := UAU^H = UAU^{-1}$ for $U$ as defined in \eqref{equ:U-transform} is $R_{n}$-normal. As $A'$ is still diagonalizable, Theorem \ref{thm:normal2X} applies and there exists a perplectic matrix $P \in \Gl_{n}(\CC)$ such that
$$P^{-1}A'P = X_{A'} = \begin{bmatrix} D_{11} & R_nD_{12} \\ R_nD_{21} & D_{22} \end{bmatrix} = \left[ \tikz[baseline=+4mm]{ \draw[color=black] (0,1) -- (1,0); \draw[color=black] (0,0) -- (1,1); } \right]$$
is in $X$-form (where $D_{11}, D_{12}, D_{21}, D_{22} \in \MM_m(\CC)$ are diagonal). As $X_{A'}$ is $R_{n}$-normal (cf. Lemma \ref{lem:switch2}), the matrix $A'' := U^HX_{A'}U$ is $J_{2m}$-normal by Lemma \ref{lem:perp2symp} (c).  In conclusion, we have $S^{-1}AS = A''$ for the matrix $S := U^HPU$.
In accordance with Lemma \ref{lem:perp2symp} (d) the matrix $S \in \Gl_{2m}(\CC)$ is symplectic. Finally,
\begin{align*}
S^{-1}AS &= U^HX_{A'}U = \begin{bmatrix} I_m & \\ & iR_m \end{bmatrix} \begin{bmatrix} D_{11} & R_mD_{12} \\ R_mD_{21} & D_{22} \end{bmatrix} \begin{bmatrix} I_m & \\ & -iR_m \end{bmatrix} \\
&= \begin{bmatrix} D_{11} & -i R_m D_{12} R_m \\ iD_{21} & R_m D_{22} R_m \end{bmatrix} = \left[ \tikz[baseline=+4mm]{ \draw[color=black] (0,1) -- (1,0); \draw[color=black] (0,0.5) -- (0.5,0); \draw[color=black] (0.5,1) -- (1,0.5); } \right] =: D_A^{\fourdiag}
\end{align*}
is a matrix in four-diagonal-form and the statement is proven.
\end{proof}

Through the transformation with $U$ in \eqref{equ:U-transform}, the density result on $R_n$-normal matrices with $n$ distinct eigenvalues directly carries over to $\mathcal{N}(J_{2m})$.

\begin{theorem} \label{thm:approx-distinct-symp}
Let $A \in \MM_{2m}(\CC)$ be $J_{2m}$-normal and let $\varepsilon > 0$ be given. Then there exists some $J_{2m}$-normal matrix $\hat{A} \in \MM_{2m}(\CC)$ with $2m$ distinct eigenvalues such that $\Vert A - \hat{A} \Vert_2 < \varepsilon$.
\end{theorem}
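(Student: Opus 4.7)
The plan is to transfer the corresponding density result for $R_n$-normal matrices (Theorem \ref{thm:approx-distinct}) directly to the symplectic setting via the unitary similarity with $U$ defined in \eqref{equ:U-transform}. The key observation is that $U$ is unitary: a direct computation gives $U^HU = \textnormal{diag}(I_m, iR_m) \cdot \textnormal{diag}(I_m, -iR_m) = \textnormal{diag}(I_m, R_m^2) = I_{2m}$ since $R_m^2 = I_m$. Consequently, conjugation by $U$ preserves both the spectrum (it is a similarity) and the spectral norm (it is an isometry).

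First I would take the given $J_{2m}$-normal matrix $A \in \MM_{2m}(\CC)$ and form $A' := UAU^H$. By Lemma \ref{lem:perp2symp} (d), $A'$ is $R_n$-normal, where $n = 2m$. Next I would apply Theorem \ref{thm:approx-distinct} to $A'$: for the given $\varepsilon > 0$, there exists some $R_n$-normal matrix $\hat{A}' \in \MM_n(\CC)$ with $n = 2m$ pairwise distinct eigenvalues satisfying $\Vert A' - \hat{A}' \Vert_2 < \varepsilon$.

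Then I would define $\hat{A} := U^H \hat{A}' U$. By Lemma \ref{lem:perp2symp} (d) (the ``in turn'' direction), $\hat{A}$ is $J_{2m}$-normal. Since $\hat{A}$ and $\hat{A}'$ are similar (via the invertible $U$), they share the same spectrum, so $\hat{A}$ has $2m$ distinct eigenvalues. Finally, using the unitary invariance of the spectral norm,
$$\Vert A - \hat{A} \Vert_2 = \Vert U^H A' U - U^H \hat{A}' U \Vert_2 = \Vert U^H (A' - \hat{A}') U \Vert_2 = \Vert A' - \hat{A}' \Vert_2 < \varepsilon,$$
which yields the desired approximation.

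There is no real obstacle here; the result is essentially a corollary of Theorem \ref{thm:approx-distinct} once one observes that the correspondence $A \leftrightarrow UAU^H$ from Lemma \ref{lem:perp2symp} (d) is simultaneously a bijection between $\mathcal{N}(R_n)$ and $\mathcal{N}(J_{2m})$, a spectrum-preserving similarity, and an isometry of $(\MM_{2m}(\CC), \Vert \cdot \Vert_2)$. The only minor point to verify is the unitarity of $U$, which comes from $R_m^2 = I_m$ as noted above.
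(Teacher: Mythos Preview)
Your proof is correct and follows essentially the same route as the paper: both transfer Theorem \ref{thm:approx-distinct} to the $J_{2m}$-setting by conjugating with the unitary $U$ from \eqref{equ:U-transform}, invoking Lemma \ref{lem:perp2symp} (d) in each direction, and using that unitary similarity preserves both the spectrum and the $2$-norm. The only difference is cosmetic: you explicitly verify $U^HU = I_{2m}$, whereas the paper declares $U$ unitary at its introduction.
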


\begin{proof}
Let $A \in \MM_{2m}(\CC)$ be $J_{2m}$-normal ($n=2m$) and let $\varepsilon > 0$ be given. Consider $A' := UAU^H$ for $U \in \Gl_{2m}(\CC)$ as in \eqref{equ:U-transform}. Then $A'$ is $R_{n}$-normal and, according to Theorem \ref{thm:approx-distinct}, there exists some $R_{n}$-normal matrix $\tilde{A} \in \MM_{n}(\CC)$ with $n$ distinct eigenvalues such that $\Vert A' - \tilde{A} \Vert_2 < \varepsilon$. Now define $\hat{A} := U^H \tilde{A} U$ which is $J_{2m}$-normal according to Lemma \ref{lem:perp2symp} (d). Then
$$\Vert A - \hat{A} \Vert_2 = \Vert U^HA'U  - U^H \tilde{A} U \Vert_2 = \Vert A' - \tilde{A} \Vert_2 < \epsilon$$
since $U$ is unitary and $\Vert \cdot \Vert_2$ is a unitarily invariant matrix norm. This completes the proof as the $n$ distinct eigenvalues of $\tilde{A}$ are exactly those of $\hat{A}$.
\end{proof}

Theorem \ref{thm:approx-distinct-symp} shows that the set of $J_{2m}$-normal matrices with $2m$ distinct eigenvalues is dense in $\mathcal{N}(J_{2m})$.
For the same reasoning as outlined subsequently to Theorem \ref{thm:approx-distinct} these matrices form an open and dense subset of $\mathcal{N}(J_{2m})$ with respect to the subspace topology on $\mathcal{N}(J_{2m})$. As all matrices in $\mathcal{N}(J_{2m})$ with $2m$ distinct eigenvalues are diagonalizable (and, according to \cite[Thm.\,9]{CruzSalten} matrices with $2m$ distinct eigenvalues are also dense in classes of skew-Hamiltonian, Hamiltonian and symplectic matrices), we obtain the analogous result to Theorem \ref{thm:genericity} for $J_{2m}$-normal matrices.

\begin{theorem} \label{thm:genericity-symp}
With $\mathcal{S} \subseteq \MM_{2m}(\CC)$ being one of the sets of all Hamiltonian, skew-Hamiltonian, symplectic or $J_{2m}$-normal matrices, the following is true:
the set of all matrices $A \in \mathcal{S}$ for which a symplectic matrix $S \in \Gl_{2m}(\CC)$ exists such that $S^{-1}AS = D_A^{\fourdiag}$ is in four-diagonal-form is open and dense. Consequently, the set of all matrices $A \in \mathcal{S}$ for which such a symplectic similarity transformation does not exist is nowhere dense in $\mathcal{S}$.
\end{theorem}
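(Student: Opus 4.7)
The plan is to mirror the proof of Theorem~\ref{thm:genericity} in the symplectic setting, using Theorem~\ref{thm:normal2-4diags} in place of Theorem~\ref{thm:normal2X} as the structural ingredient that turns diagonalizability into a canonical form. Since the Hamiltonian, skew-Hamiltonian, and symplectic classes are all contained in $\mathcal{N}(J_{2m})$, the whole argument can be run uniformly inside the ambient space $\MM_{2m}(\CC)$ with $\mathcal{S}$ equipped with its subspace topology.

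First, I would collect the density ingredients. For $\mathcal{S} = \mathcal{N}(J_{2m})$, Theorem~\ref{thm:approx-distinct-symp} says that matrices with $2m$ pairwise distinct eigenvalues are dense; for $\mathcal{S}$ being one of the Hamiltonian, skew-Hamiltonian, or symplectic classes, the analogous density of matrices with $2m$ pairwise distinct eigenvalues is \cite[Thm.\,9]{CruzSalten}. In each case, let $\mathcal{E}' \subset \MM_{2m}(\CC)$ denote the set of matrices with $2m$ pairwise distinct eigenvalues and put $\mathcal{E} := \mathcal{E}' \cap \mathcal{S}$.

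Second, I would argue openness. The set $\mathcal{E}'$ is open in $\MM_{2m}(\CC)$ because its complement is cut out by the vanishing of the discriminant of the characteristic polynomial (alternatively, invoke \cite[Thm.\,11.5]{Hartfiel} as was done following Theorem~\ref{thm:approx-distinct}). By the definition of the subspace topology, $\mathcal{E}$ is open in $\mathcal{S}$, and by the density collected in the previous step it is also dense in $\mathcal{S}$. Consequently the complement $\mathcal{S} \setminus \mathcal{E}$ is nowhere dense in $\mathcal{S}$.

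Third, I would invoke Theorem~\ref{thm:normal2-4diags} on $\mathcal{E}$. Every $A \in \mathcal{E}$ is $J_{2m}$-normal (since $\mathcal{S} \subseteq \mathcal{N}(J_{2m})$ in all four cases) and diagonalizable (having $2m$ distinct eigenvalues), so there exists a symplectic $S \in \Gl_{2m}(\CC)$ with $S^{-1}AS = D_A^{\fourdiag}$ in four-diagonal-form. Denote by $\mathcal{C} \subseteq \mathcal{S}$ the set of matrices admitting such a similarity. Then $\mathcal{E} \subseteq \mathcal{C}$, so $\mathcal{C}$ contains an open dense subset of $\mathcal{S}$, and $\mathcal{S} \setminus \mathcal{C} \subseteq \mathcal{S} \setminus \mathcal{E}$ is nowhere dense, which is the stated conclusion.

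No step here is a substantive obstacle, since all the heavy lifting is already done in Theorems~\ref{thm:normal2-4diags} and~\ref{thm:approx-distinct-symp} (and their perplectic predecessors). The only point requiring care is to invoke \cite[Thm.\,9]{CruzSalten} uniformly across the three structured classes to make the density statement hold in each of the four ambient choices of $\mathcal{S}$; once that is in hand, the topological bookkeeping proceeds exactly as in Theorem~\ref{thm:genericity}.
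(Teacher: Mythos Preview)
Your proposal is correct and follows essentially the same approach as the paper: the paper's justification (given in the paragraph preceding the theorem rather than in a separate proof block) likewise invokes Theorem~\ref{thm:approx-distinct-symp} for $\mathcal{N}(J_{2m})$ and \cite[Thm.\,9]{CruzSalten} for the Hamiltonian, skew-Hamiltonian, and symplectic classes, then repeats the topological argument from the discussion after Theorem~\ref{thm:approx-distinct} and applies Theorem~\ref{thm:normal2-4diags}. Your observation that one only obtains $\mathcal{E}\subseteq\mathcal{C}$, so that $\mathcal{C}$ contains an open dense set (rather than being shown open directly), is in fact slightly more careful than the paper's own phrasing.
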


For an analogous reasoning as outlined in Section \ref{ss:genericity}, the set of $J_{2m}$-normal matrices for which a symplectic similarity transformation to four-diagonal-form does not exist can be considered as small from a topological point of view. For this reason, we call the four-diagonal form \eqref{equ:4diagonals} \emph{generic} for $J_{2m}$-normal matrices.

\section{Conclusion}
\label{s:conc}

We introduced a canonical form for matrices that are nondefective and normal with respect to the perplectic scalar product $[x,y] = x^HR_ny$ and the symplectic scalar product $[x,y] = x^HJ_{2m}y$. Such matrices need not be perplectically/symplectically diagonalizable in contrast to euclidean normal matrices (i.e. those matrices $A$ satisfying $A^HA = AA^H$) which are always unitarily diagonalizable. We showed that nondefective $R_n$-normal matrices can always be transformed into '$X$-form' via a perplectic similarity whereas diagonalizable $J_{2m}$-normal matrices are always symplectically similar to a matrix in 'four-diagonal-form'. According to \cite{CruzSalten} the diagonalizable matrices form a dense subset among all $R_n$/$J_{2m}$-normal matrices. We used the established canonical forms to prove that the set of all $R_n$/$J_{2m}$-normal matrices with pairwise distinct eigenvalues constitute an open and dense subset among all $R_n$/$J_{2m}$-normal matrices. From this it followed that the canonical forms can be seen as ``generic'' for these type of matrices since the set for which they do not exist is nowhere dense  and thus, from a topological point of view, small.







\end{document}